\begin{document}
%

\markboth{S.Y. Su, C.L. Tong, Y.S. Yang}{Total colouring  of  $C_{n}(1, 3)$}

\title{Total colouring  of  circulant graphs $C_{n}(1, 3)$}

\author{SenYuan Su$^1$, Chunling Tong\coraut$^{1,2}$, Yuansheng Yang$^3$}

\address{$^1$School of Information Science and Electricity Engineering, Shandong Jiaotong University,  Jinan $250357$,  China\\
    $^2$Department of Mathematics, Simon Fraser University, Burnaby $V5A~1S6$, Canada\\
	$^3$College of Computer Science, Dalian University of Technology, Dalian, $116024$, China}
\emails{susy@stu.sdjtu.edu.cn (S.Y. Su), tongcl@sdjtu.edu.cn (C.L. Tong), \\ yangys@dlut.edu.cn (Y.S. Yang)}
\maketitle
\begin{abstract}
	Total colouring of 4-regular circulant graphs is an interesting but challenging topic, and has attracted much attention. However, it still remains an open question to determine the total chromatic numbers of $C_{n}(1, 3)$, a subclass of 4-regular circulant graphs, even after many efforts. In this paper, we investigate the total colouring of these graphs and  determine their total  chromatic numbers. Our results show that the total chromatic numbers of $C_{n}(1, 3)$ are 6 for $n=7,8,12,13,17$, and 5 for all others.
\end{abstract}
\subjclass{05C15}  
\keywords{total colouring, total chromatic number, circulant graph}        



\section{Introduction}
We use $V(G)$ and $E(G)$ to denote the vertex set and edge set of  a simple connected graph $G$, respectively. A \textit{total $k$-colouring} of a graph $G$ is a map $\sigma$: $V(G)\cup E(G)\rightarrow\{1,2,\cdots,k\}$, such that no  two adjacent or incident elements of $V(G)\cup E(G)$ receive the same integers.  The smallest integer $k$ needed for such a map  is known as the \textit{total chromatic number}, denoted as $\chi''(G)$. Determining total chromatic number is NP-complete \cite{SArr89}, and NP-hard even for $r$-regular bipartite graphs with $r \geq 3$ \cite{Mc94}.

 For  a simple graph, there is a long-standing conjecture on total colouring, proposed by Behzad \cite{Be65} and Vizing \cite{Viz68} independently. It supposes that the total chromatic number of a simple graph $G$ is not more than $\Delta(G)+2$, where $\Delta(G)$ represents the maximum degree of $G$. The conjecture implies that for every simple graph $G$, the total chromatic number is either equal to $\Delta(G) + 1$ or equal to $\Delta(G) + 2$, since it cannot be less than $\Delta(G) + 1 $. Usually, a graph with $\chi''(G)$ = $\Delta(G)$ + 1 is known as Type I while  a graph with $\chi''(G)$ = $\Delta(G)$ + 2 is known as Type II. The conjecture has been verified by a great number of graphs, and exact values of total chromatic number for many graphs were determined \cite{Gee21,Gee23,Pal23,Pra22,Tong09,Tong19,Yap96}.

 A general 4-regular circulant graph, denoted as $C_n(d_1, d_2)$, is  the graph that has a vertex set $V$ = $\left\{v_0, v_1,\cdots,v_{n-1}\right\}$ and an edge set $E$ = $\bigcup_{i=1}^{2}E_i$ with $E_i = \{e_0^{i}, e_1^{i},\cdots, e_{n-1}^{i}\}$ and $e_m^{i} = v_mv_{m+d_i}$, where $1 \le d_1 < d_2 \le \left\lfloor{\frac{n-1}{2}}\right\rfloor$ and the indices of vertices are considered modulo $n$. A lot of works have contributed to determining the total chromatic numbers of 4-regular circulant graphs \cite{Cam03,Far23,Khe08,Nav22,Nig21}.

$C_n(1, 3)$  is a subclass of 4-regular circulant graphs with $d_1=1$ and $d_2=3$. Several studies have been conducted on the total colouring of  $C_{n}(1, 3)$\cite{Far23,Khe08,Nav22,Nig21}. Khennoufa and Togni conjectured that only a finite number of $C_{n}(1, 3)$ are Type II, while all others are Type I \cite{Khe08}. It has been proven that 
$C_{5p}(1,3)$ are Type I \cite{Khe08,Nav22}, and $C_{3p}(1,3)$ are  Type I except for  $C_{12}(1,3)$, which is Type II \cite{Far23,Nig21}. Despite these advances, determining the total chromatic numbers of $C_{n}(1, 3)$
 remains an open question for most  $n$.
  Given the limited  research findings, this paper  focuses on the  total colouring of $C_{n}(1, 3)$. We aim to find the total chromatic numbers of $C_n(1, 3)$ for all $n\geq 7$.

\section{Preliminaries }\label{Secpre}

We first recall two useful results obtained by Kostochka \cite{Kos77}, Chetwynd and Hilton \cite{Che88}, respectively. For our purpose, they can be recapitulated as the following lemmas.
\begin{lemma}
A 4-regular graph always has  a total 6-colouring \cite{Kos77}.
\label{Lem1T}
\end{lemma}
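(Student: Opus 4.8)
Since $G$ is $4$-regular we have $\Delta(G)=4$, so the assertion is exactly the statement that the Behzad--Vizing bound $\chi''(G)\le\Delta(G)+2=6$ holds in this case. Recall that a total $k$-colouring of $G$ is the same thing as a proper vertex $k$-colouring of the total graph $T(G)$, whose vertex set is $V(G)\cup E(G)$. A short computation shows that $T(G)$ is $8$-regular (each original vertex is adjacent in $T(G)$ to its $4$ neighbours and its $4$ incident edges, and each edge is adjacent to its $2$ endpoints and the $6$ edges meeting it), so Brooks' theorem alone yields only $\chi''(G)\le 8$. The plan is therefore to get down to $6$ by colouring edges and vertices in a coordinated way rather than independently.

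First I would apply Vizing's theorem \cite{Viz68} to obtain a proper edge colouring of $G$ with the palette $\{1,\dots,5\}$ (possible since $\chi'(G)\le\Delta(G)+1=5$), and reserve a sixth colour. At every vertex $v$ the four incident edges carry four distinct colours from $\{1,\dots,5\}$, so exactly one colour of $\{1,\dots,5\}$ is \emph{missing} at $v$; the colours admissible for the vertex $v$ in a total $6$-colouring are then precisely this missing colour together with $6$. This reduces the problem to a list vertex-colouring of $G$ in which every vertex carries a list of size $2$: if these $2$-lists admit a proper colouring, the edge colouring and the chosen vertex colours together form the desired total $6$-colouring, because vertex--edge incidences are ruled out by construction and vertex--vertex conflicts are ruled out by properness.

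The main obstacle is that a $4$-regular graph is never $2$-choosable (its core is the graph itself and contains vertices of degree $4$), so for a \emph{fixed} edge colouring the induced $2$-lists may well fail to be colourable. The resolution must exploit the freedom in the edge colouring: when the list colouring gets stuck along some subgraph, one recolours edges along a Kempe chain (a maximal two-coloured alternating subgraph) to change which colour is missing at the relevant vertices, thereby altering the offending lists. Turning this idea into a proof requires a careful reducibility analysis---identifying local configurations on which a partial total colouring can always be rearranged and extended, and ruling out the bad cases where two adjacent vertices are forced onto the same non-$6$ colour. This delicate interplay between the edge recolouring and the vertex lists is precisely where the difficulty lies; since $\Delta=4$ is too small for probabilistic or asymptotic arguments, the full combinatorial case analysis carrying it out is the content of Kostochka's theorem \cite{Kos77}, which establishes the bound $\chi''(G)\le 6$.
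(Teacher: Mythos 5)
The paper offers no proof of this lemma at all: it is Kostochka's theorem, stated with a citation to \cite{Kos77}, so there is no in-paper argument to compare yours against. Judged on its own terms, your write-up is an accurate roadmap rather than a proof. The preliminary observations are all correct: the total graph $T(G)$ of a $4$-regular graph is $8$-regular; Vizing's theorem gives a proper edge colouring with $5$ colours, leaving exactly one colour of $\{1,\dots,5\}$ missing at each vertex; this reduces the problem to a list vertex-colouring with $2$-lists; and $4$-regular graphs are not $2$-choosable, so a fixed edge colouring cannot be expected to work and must be modified.

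The genuine gap is that the entire content of the theorem lies in the one step you describe only in outline: proving that the edge colouring can always be recoloured (via Kempe chains and a reducibility analysis) so that the induced $2$-lists become properly colourable. You acknowledge this yourself when you write that the full combinatorial case analysis ``is the content of Kostochka's theorem'' --- but that makes the argument circular as a proof of the lemma, since you are invoking the result to discharge its own hardest step. Everything before that point is setup; nothing in the proposal actually rules out the bad configurations you mention (two adjacent vertices forced onto the same non-sixth colour). A minor further point: the result in \cite{Kos77} is stated for multigraphs of maximum degree $4$, and the citation you attach to Vizing's edge-colouring theorem is actually to his problems survey, not to the edge-colouring paper; neither affects the substance, but both would need fixing in a final version.
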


\begin{lemma}
If a regular graph $G$ has a total $(\Delta(G) + 1)$-colouring,  then it has a  vertex-colouring with colours $1,2,\dots,\Delta(G) + 1$ such that $|V_{j}|\equiv ~|V(G)|~(mod~2)$ $( 1\leq j\leq  \Delta(G) + 1 )$, where  $V_{j}$ is the set of vertices  coloured $j$ \cite{Che88}.
\label{Lem1}
\end{lemma}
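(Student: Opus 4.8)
The plan is to take the given total $(\Delta(G)+1)$-colouring $\sigma$ and simply read off its restriction to the vertices. Write $r=\Delta(G)$, so $G$ is $r$-regular and $\sigma$ colours $V(G)\cup E(G)$ with the colours $1,2,\dots,r+1$. Since adjacent vertices are adjacent elements of $V(G)\cup E(G)$, the restriction $\sigma|_{V(G)}$ is already a proper vertex-colouring using colours from $\{1,\dots,r+1\}$; this is the colouring whose colour classes $V_1,\dots,V_{r+1}$ I claim satisfy the parity condition, and no modification of it is needed.

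The key observation is a counting identity at each vertex. Fix a vertex $v$. The elements of $V(G)\cup E(G)$ that are equal to or incident with $v$ are $v$ itself together with its $r$ incident edges, giving $r+1$ elements; by the definition of a total colouring these receive $r+1$ pairwise distinct colours. As there are exactly $r+1$ available colours, every colour $j\in\{1,\dots,r+1\}$ occurs exactly once among $v$ and its incident edges. Consequently, for a fixed colour $j$, either $v\in V_j$ (and then no edge at $v$ is coloured $j$), or $v\notin V_j$ (and then exactly one edge at $v$ is coloured $j$).

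Now fix $j$ and let $M_j$ denote the set of edges coloured $j$; since incident edges get distinct colours, $M_j$ is a matching of $G$. By the previous paragraph a vertex is covered by $M_j$ precisely when it does not lie in $V_j$, so $V(G)$ splits as the disjoint union of $V_j$ and the set of endpoints of $M_j$. The latter has size $2|M_j|$, whence
\[
|V(G)| = |V_j| + 2|M_j|,
\]
and therefore $|V_j|\equiv |V(G)| \pmod{2}$ for every $j$, as required.

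The argument is short, and its only real content is the counting step: because $G$ is $r$-regular and only $r+1$ colours are used, each colour is ``saturated'' at each vertex, which is exactly what forces the clean complementary relationship between $V_j$ and the matching $M_j$. The main point to watch is that the regularity hypothesis is essential — for an irregular graph a vertex of degree below $\Delta(G)$ could miss more than one colour, breaking the exact once-per-vertex count and hence the parity conclusion.
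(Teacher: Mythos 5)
Your proof is correct. The paper itself offers no proof of this lemma --- it is quoted as a known result of Chetwynd and Hilton \cite{Che88} --- so there is nothing internal to compare against; your argument is the standard one: in an $r$-regular graph with a total $(r+1)$-colouring, the $r+1$ mutually conflicting elements at each vertex (the vertex plus its $r$ incident edges) must use all $r+1$ colours exactly once, so the colour-$j$ edges form a matching covering precisely $V(G)\setminus V_j$, giving $|V(G)|=|V_j|+2|M_j|$ and hence the parity claim. Your closing remark that regularity is essential is also well taken.
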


The two lemmas will be helpful to our demonstration in determining the total chromatic numbers of $C_n(1, 3)$. According to Lemma \ref{Lem1T}, the total chromatic numbers of $C_n(1, 3)$ should be equal to either $5$ or 6. Therefore, Lemma \ref{Lem1T} tells us that  $\chi''(C_{n}(1,3)) = 6$ if and only if $\chi''(C_{n}(1,3))\neq 5$, while Lemma \ref{Lem1} can help us to  exclude the possibility of $\chi''(C_{n}(1,3))=5$ for some graphs.

As mentioned above, it is a challenging issue to determine  $\chi''(C_{n}(1,3))$ for all $n\geq 7$. We need to resolve the issue  case by case for different $n$.  Noting that any positive integer $n$ can be always expressed as $n=5k+1, 5k+2,~5k+3, ~5k+4$,~or $5k$ with $k$ being a nonnegative integers, we can further rewritten them as  $n=5(k-7)+9\times4,~5(k-5)+9\times3,~5(k-3)+9\times2,~5(k-1)+9\times 1$,~or~ $5k+9\times0$. Therefore,  $C_{n}(1, 3)$  for $n\geq 7$ can be classified into four distinct cases:  $C_n(1, 3)$  for $n = 5p + 9q$ with $p$ and $q$ being nonnegative integers, $C_n(1, 3)$ for $n=11,16,21,26,31$, $C_n(1, 3)$ for  $n=7,12,17,22$, and $C_n(1, 3)$  for $n=8,13$. For some of them, we can directly obtain $\chi''(C_{n}(1,3))=5$ by constructing a total 5-colouring, while for others, we will demonstrate $\chi''(C_{n}(1,3))=6$ by excluding $\chi''(C_{n}(1,3))$=5.

With these preliminaries, we may now start to determine  $\chi''(C_{n}(1,3))$ for all $n\geq 7$. Our paper is organized as follows. We will first determine the chromatic numbers of $C_{n}(1, 3)$ for $n=5p+9q$ with $p$ and $q$ being  nonnegative integers in section 3, and then we determine the chromatic numbers of $C_{n}(1, 3)$ for $n=11,16,21,26,31$ in Section 4, for $n=7,12,17,22$ in Section 5, and for $n=8,13$ in Section 6.  Section \ref{Sec4} is the conclusion.

\section{Total colouring of  $C_{n}(1, 3)$ for $n = 5p + 9q$ }\label{Secpq}

 In this section, we  study the total colouring of  $C_{n}(1, 3)$ for $n = 5p + 9q$.

\begin{lemma}
$\chi''(C_{n}(1,3)) = 5$ for $n = 5p + 9q$
with nonnegative integers $p,q$.
\label{Lempq}
\end{lemma}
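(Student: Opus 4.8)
The plan is to produce an explicit total $5$-colouring for every such $n$; this alone suffices, since Lemma~\ref{Lem1T} gives $\chi''(C_n(1,3)) \le 6$ while the trivial bound $\chi''(G) \ge \Delta(G)+1 = 5$ always holds, so any total $5$-colouring forces $\chi''(C_n(1,3)) = 5$. Before constructing anything, I would record the rigidity that a total $5$-colouring of a $4$-regular graph enjoys. Writing $c(v_i)$ for the colour of vertex $v_i$, $a_i$ for the colour of the $1$-edge $e_i^1 = v_i v_{i+1}$, and $b_i$ for the colour of the $3$-edge $e_i^2 = v_i v_{i+3}$, the four edges incident with $v_i$ are exactly $a_{i-1}, a_i, b_{i-3}, b_i$. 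Since $v_i$ together with these four edges is a set of five pairwise adjacent or incident elements, a valid total $5$-colouring must satisfy
\begin{equation*}
\{c(v_i),\, a_{i-1},\, a_i,\, b_{i-3},\, b_i\} = \{1,2,3,4,5\} \quad \text{for every } i ,
\end{equation*}
together with the vertex-adjacency conditions $c(v_i) \ne c(v_{i+1})$ and $c(v_i) \ne c(v_{i+3})$; conversely, any assignment meeting these two requirements is a proper total $5$-colouring. Thus the whole problem reduces to writing down colour sequences $c, a, b$ obeying these purely local constraints and then verifying them.

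The construction itself would exploit the additive shape $n = 5p + 9q$ by tiling the cycle with $p$ blocks of length $5$ and $q$ blocks of length $9$, laid consecutively around $v_0, \dots, v_{n-1}$. I would design one length-$5$ colouring pattern $B_5$ and one length-$9$ pattern $B_9$, each a valid partial colouring of the corresponding segment, and force both to carry the \emph{same} colour data on the three vertices (and their incident edges) at each end. Because every interaction in $C_n(1,3)$ reaches only distance $3$, agreement on a window of three consecutive vertices at each seam guarantees that any cyclic word formed from copies of $B_5$ and $B_9$ splices into a globally consistent colouring, including the wrap-around from $v_{n-1}$ back to $v_0$.

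The main obstacle is precisely this interface matching. The distance-$3$ edges $b_i$ straddle block boundaries, so the last three and first three vertices of every block must share a common boundary signature, and both the local constraint on the cross-seam edges and the adjacency condition $c(v_i)\ne c(v_{i\pm1})$, $c(v_i)\ne c(v_{i\pm3})$ must hold uniformly no matter which block precedes or follows which. Once compatible patterns $B_5$ and $B_9$ are exhibited, correctness becomes a finite verification: one checks the local constraint and the adjacency condition inside each block type and across a single generic seam, after which periodicity together with the matched interfaces extends validity to all of $C_n(1,3)$. I expect the substance of the argument to lie in the (otherwise routine) bookkeeping needed to present two blocks with a shared boundary signature, rather than in any deeper conceptual difficulty.
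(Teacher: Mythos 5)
Your overall strategy is exactly the paper's: both reduce the lemma to exhibiting an explicit total $5$-colouring built by tiling the cycle with $p$ blocks of length $5$ and $q$ blocks of length $9$, and your reformulation of the constraints is correct (the rainbow condition $\{c(v_i),a_{i-1},a_i,b_{i-3},b_i\}=\{1,\dots,5\}$ at every vertex, plus $c(v_i)\ne c(v_{i\pm1})$, $c(v_i)\ne c(v_{i\pm3})$, is indeed equivalent to being a proper total $5$-colouring, and the lower bound $\chi''\ge 5$ is trivial). However, the proposal stops precisely where the mathematical content begins: you never exhibit the patterns $B_5$ and $B_9$. For this lemma the existence of two such compatible blocks \emph{is} the theorem --- everything else (the lower bound, the locality of the constraints, the reduction to checking one generic seam) is routine --- so a proof that defers "the otherwise routine bookkeeping" of producing the blocks has not actually proved anything. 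The paper's proof consists essentially only of the explicit words $(24351)^p(212534121)^q$ for the vertices, $(12123)^p(453453453)^q$ for the $1$-edges and $(45534)^p(121212534)^q$ for the $3$-edges.

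There is also a concrete reason to doubt that your plan, as stated, goes through without modification. You require both block types to carry the \emph{same} colour data on their first three and last three vertices, so that copies of $B_5$ and $B_9$ can be spliced in any order. The paper's blocks do not satisfy this (the first three vertices of its length-$5$ block are coloured $2,4,3$ while those of its length-$9$ block are coloured $2,1,2$), and it is not at all evident that blocks with a common boundary signature in your strong sense exist. What the paper actually needs, and verifies, is the weaker condition that each ordered seam occurring in the fixed cyclic arrangement $(B_5)^p(B_9)^q$ --- namely $B_5B_5$, $B_5B_9$, $B_9B_9$ and the wrap-around $B_9B_5$ --- is individually valid. So even granting your framework, you would either have to find blocks meeting your stronger interface condition (which may not exist) or retreat to the pairwise seam-compatibility check for a fixed block order; in either case the explicit colour sequences still have to be produced and verified before the lemma is established.
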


\begin{proof}
For simplicity, we write $(i_1 i_2 \ldots i_t)^p$ for the sequence obtained by repeating $i_1 i_2 \ldots i_t$ exactly $p$ times, where  $i_j \in \{1,2,3,4,5\}$. For example, $(12345)^2 = 1234512345$.
Let	
		$V$ = $\left\{v_i : 0\le i \le n-1\right\}$,	
	$E_1$ = $\left\{v_iv_{i+1} : 0\le i \le n-1\right\}$,
	$E_2$ = $\left\{v_iv_{i+3} : 0\le i \le n-1\right\}$
 and $\sigma(C_{n}(1, 3))=(\sigma(V),\sigma(E_1), \sigma(E_2))$.

We construct  $\sigma(C_{n}(1, 3))$ for $n = 5p + 9q$ as follows:\\
\begin{small}
	\noindent$\sigma(C_n(1, 3))=((24351)^p(212534121)^q, (12123)^p(453453453)^q, (45534)^p(121212534)^q)$.
\end{small}

The above construction $\sigma(C_n(1,3))$ assigns to each vertex and edge of $C_n(1,3)$ a colour from $\{1,2,3,4,5\}$ such that
adjacent vertices receive distinct colours,  adjacent edges receive distinct colours, 
  and every vertex receives a colour different from those of its incident edges.
So, it is a total 5-colouring of $C_{5p + 9q}(1,3)$, from which it follows that $\chi''(C_n(1,3)) \leq 5$ for $n = 5p + 9q$. 
 It is also known that
 $\chi''(C_{n}(1,3)) \geq 5$.  Hence, $\chi''(C_{n}(1,3)) = 5$ for $n = 5p + 9q$. 
\end{proof}	
Figure \ref{Figc9-10-14} shows $\sigma(C_{9}(1, 3))$, $\sigma(C_{10}(1, 3))$and $\sigma(C_{14}(1, 3))$.
\begin{figure}[h]
	\centering
	\includegraphics[scale=0.56]{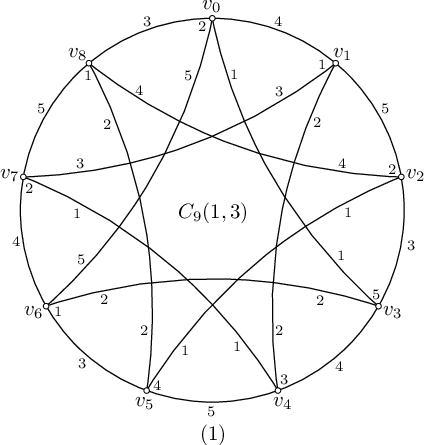}~~~~
    \includegraphics[scale=0.56]{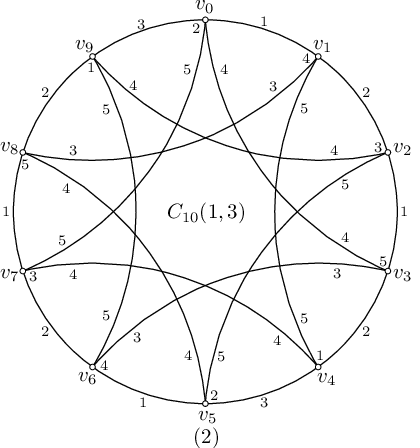}~~~~
   \includegraphics[scale=0.56]{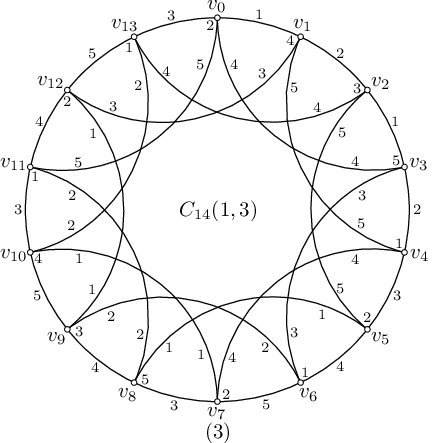}   
	\caption{$\sigma(C_{9}(1, 3))$, $\sigma(C_{10}(1, 3))$and $\sigma(C_{14}(1, 3))$.}
	\label{Figc9-10-14}
\end{figure}

\section{Total colouring of  $C_{n}(1, 3)$ for $n=11,16,21,26,31$ }\label{Sec51}

In this section, we  study the total colouring of  $C_{n}(1, 3)$ for $n=11,16,21,26,31$.

\begin{lemma}
$\chi''(C_{n}(1,3)) = 5$ for  $n=11,16,21,26,31$.
\label{Lem51}
\end{lemma}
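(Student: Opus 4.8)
The plan is to prove the lemma by direct construction, exactly in the spirit of Lemma~\ref{Lempq}: for each $n\in\{11,16,21,26,31\}$ I would exhibit a concrete total $5$-colouring $\sigma(C_n(1,3))=(\sigma(V),\sigma(E_1),\sigma(E_2))$ with values in $\{1,2,3,4,5\}$ and verify the three defining constraints (adjacent vertices differ, incident edges differ, and each vertex differs from its four incident edges). Since every vertex of $C_n(1,3)$ has degree $4$, a total $5$-colouring must in fact use all five colours exactly once on each vertex together with its four incident edges, so the colouring is forced to be very tight locally; nevertheless, because $\chi''(C_n(1,3))\ge\Delta+1=5$ is already known, producing any one valid assignment immediately gives $\chi''(C_n(1,3))=5$.

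The structural observation that guides the construction is that all five exceptional values satisfy $n\equiv1\pmod5$ and differ from one another by multiples of $5$, namely $n=5k+1$ for $k=2,3,4,5,6$. This suggests reusing the period-$5$ patterns $(24351)$, $(12123)$ and $(45534)$ from Lemma~\ref{Lempq} as a repeatable block, preceded and followed by fixed correction segments that absorb the leftover $+1$. Concretely, I would look for sequences of the form $\sigma(V)=(\alpha)(24351)^{k-2}(\beta)$, and similarly for $\sigma(E_1)$ and $\sigma(E_2)$, where the fixed strings $(\alpha)$ and $(\beta)$ together with the two flanking blocks already form a valid colouring of $C_{11}(1,3)$. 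Since every edge of $E_2$ spans exactly three positions and the inserted blocks have length $5$, adding or removing a whole block changes neither the local incidences on $E_1$ nor those on $E_2$ away from the splicing seams, so increasing $k$ preserves propriety everywhere except in bounded neighbourhoods that do not depend on $k$.

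The verification then breaks into a finite, $k$-independent check. First I would fix the seed colouring for $C_{11}(1,3)$ and confirm the three constraints on it directly. Second, I would check the block--block, $\alpha$--block and block--$\beta$ seams once, which certifies the interior for every $k$ simultaneously. The step I expect to be the main obstacle is the \emph{wraparound}: because $n\equiv1\pmod5$ is not a multiple of the period, the sequence cannot close up cyclically the way the pure $5p$ case of Lemma~\ref{Lempq} does, and the residual offset concentrates all the difficulty at the boundary. Concretely, the $E_1$-edge $v_{n-1}v_0$ and the three $E_2$-edges $v_{n-3}v_0$, $v_{n-2}v_1$, $v_{n-1}v_2$ must be reconciled with the colours that $(\alpha)$ assigns near position $0$ and that $(\beta)$ assigns near position $n-1$. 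The key to making the whole scheme work is therefore to choose $(\alpha)$ and $(\beta)$ so that this one fixed wraparound configuration is simultaneously proper for all five moduli; once such a pair is found, the lemma follows for $n=11,16,21,26,31$ at once, and the remaining arithmetic is routine.
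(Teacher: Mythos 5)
Your overall strategy --- exhibit an explicit total $5$-colouring for each of the five graphs and combine it with the trivial bound $\chi''(C_n(1,3))\ge\Delta(C_n(1,3))+1=5$ --- is exactly the route the paper takes: its proof of this lemma consists of five concrete colour sequences for $n=11,16,21,26,31$ together with the verification of the three adjacency/incidence constraints. The difference is that the paper actually writes the colourings down, whereas your proposal stops at describing how you would search for them. For an existence statement whose proof \emph{is} the construction, the explicit strings (or, in your unified scheme, the concrete pair $(\alpha,\beta)$ together with the finite seed and seam checks) are the entire mathematical content; the sentence ``once such a pair is found, the lemma follows'' leaves the only nontrivial step unperformed. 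As written, the proposal is a plausible plan, not a proof.

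A further caution about the specific plan: it is not evident that a single pair $(\alpha,\beta)$ compatible with the period-$5$ blocks $(24351)$, $(12123)$, $(45534)$ of Lemma \ref{Lempq} exists, and the paper's own data suggests its authors did not find one. Of the five published colourings, only $\sigma(C_{21}(1,3))$ and $\sigma(C_{31}(1,3))$ are visibly related by inserting period-$5$ blocks, and even there the inserted vertex block is $(12345)$, not $(24351)$; the colourings for $n=11,16,26$ are ad hoc and mutually unrelated. So the wraparound difficulty you correctly identify may not be resolvable within the template you propose, and even if it is, establishing that requires actually producing and checking the seed colouring of $C_{11}(1,3)$ and the block seams --- which is precisely the part that is missing. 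The gap would be closed either by exhibiting such an $(\alpha,\beta)$ and performing the bounded verification, or by falling back on five independent explicit colourings as the paper does.
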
	

\begin{proof}
We construct  $\sigma(C_{n}(1, 3))$ for $n$ = 11,16,21,26,31  as follows:\\
 \begin{small}
		\noindent$\sigma(C_{11}(1, 3))=(25354543431, 12121212123, 43435354545)$,\\
		\noindent$\sigma(C_{16}(1, 3))=(2453534242353524, 1212121314141415, 4345453525232353)$,\\
		\noindent$\sigma(C_{21}(1, 3))=(234345453512345123451, 121212121231213451323, 453534345454532214545)$,\\		
		\noindent$\sigma(C_{26}(1, 3))=(24535343454535141252313421,12121212121212323434545145,\\
		\indent~~~~~~~~~~~~~~~~~43454535343454515121232353)$,\\
		\noindent$\sigma(C_{31}(1, 3))=(2343454535123451234512345123451,1212121212312134513231213451323,\\
		\indent~~~~~~~~~~~~~~~~~4535343454545322145454532214545)$.
	\end{small}

These constructions use only colours from $\{1,2,3,4,5\}$, and in each case,
adjacent vertices receive distinct colours, adjacent edges receive distinct colours, and
no vertex shares a colour with any of its incident edges.
Hence, each gives a total 5-colouring of $C_n(1,3)$, which yields $\chi''(C_n(1,3)) \leq 5$ for $n = 11, 16, 21, 26, 31$.  
Moreover, $\chi''(C_{n}(1,3)) \geq 5$. Therefore, $\chi''(C_{n}(1,3)) = 5$ for $n = 11,16,21,26,31$.
\end{proof}
Figure \ref{Figc11-16} shows $\sigma(C_{11}(1,3))$ and $\sigma(C_{16}(1,3))$.
\begin{figure}[h]
	\centering
	\includegraphics[scale=0.62]{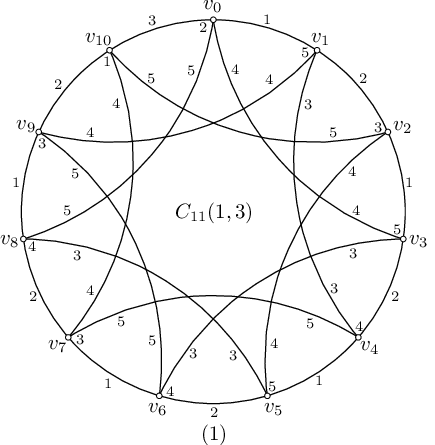}~~~~~~~
    \includegraphics[scale=0.62]{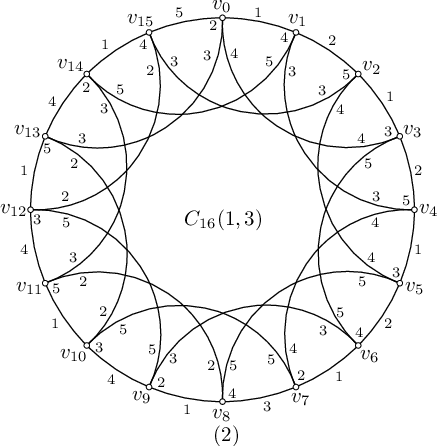}
	\caption{$\sigma(C_{11}(1,3))$ and $\sigma(C_{16}(1,3))$.}
	\label{Figc11-16}
\end{figure}

\section{Total colouring of  $C_{n}(1, 3)$ for $n=7,12,17,22$ }\label{Sec52}

In this section, we  study the total colouring of  $C_{n}(1, 3)$ for $n=7,12,17,22$.

\subsection{Total colouring of  $C_{7}(1, 3)$  }

\begin{lemma}
$\chi''(C_{7}(1,3)) = 6$.
\label{Lem7}
\end{lemma}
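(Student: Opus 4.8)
The upper bound is immediate: since $C_7(1,3)$ is $4$-regular, Lemma \ref{Lem1T} gives $\chi''(C_7(1,3)) \le 6$, and $\chi''(G) \ge \Delta(G)+1 = 5$ always holds. So the whole task reduces to ruling out a total $5$-colouring, i.e.\ to proving $\chi''(C_7(1,3)) \ne 5$. My plan is to obtain this from the parity obstruction of Lemma \ref{Lem1} rather than by any direct case analysis.

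The one structural fact I would establish first is that the independence number of $C_7(1,3)$ equals $2$. Here $v_0$ is adjacent to $v_1, v_3, v_4, v_6$, so its only non-neighbours are $v_2$ and $v_5$; but $v_2$ and $v_5$ differ by $3$ and are therefore adjacent, so $\{v_0,v_2,v_5\}$ is not independent. By vertex-transitivity of the circulant the same holds at every vertex, so no independent set has three vertices, while $\{v_0,v_2\}$ is independent; hence $\alpha(C_7(1,3)) = 2$. (Equivalently, the complement of $C_7(1,3)$ is the $7$-cycle $C_7(2)$, whose cliques, being exactly the independent sets of $C_7(1,3)$, have at most two vertices.)

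With this in hand I would argue by contradiction. Suppose $\chi''(C_7(1,3)) = 5$. Since $C_7(1,3)$ is regular with $\Delta = 4$ and $|V| = 7$, Lemma \ref{Lem1} supplies a proper vertex colouring with colours $1,\dots,5$ in which every colour class $V_j$ satisfies $|V_j| \equiv 7 \equiv 1 \pmod 2$. Thus each $V_j$ is nonempty of odd size; but each $V_j$ is an independent set, so $|V_j| \le \alpha(C_7(1,3)) = 2$, which forces $|V_j| = 1$ for every $j$. Then $\sum_{j=1}^{5}|V_j| = 5 \ne 7 = |V|$, a contradiction. Therefore $\chi''(C_7(1,3)) \ne 5$, and together with the upper bound this yields $\chi''(C_7(1,3)) = 6$.

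The argument is short because essentially all the content sits in the independence-number computation. The genuine obstacle one would face in a naive approach is the combinatorial explosion of trying to eliminate every candidate total $5$-colouring by hand; the parity statement of Lemma \ref{Lem1} bypasses this completely. The only step that needs care is verifying that odd class sizes combined with $\alpha = 2$ leave no feasible distribution over $7$ vertices, which is precisely where the smallness of the independence number relative to $|V|$ does the work.
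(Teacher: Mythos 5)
Your proof is correct and follows essentially the same route as the paper: both invoke Lemma \ref{Lem1} together with the fact that $\alpha(C_7(1,3))=2$ to force $|V_j|=1$ for each colour class, contradicting $\sum_j |V_j|=7$, and then use Lemma \ref{Lem1T} for the upper bound. Your only addition is an explicit verification of the independence number, which the paper states without proof.
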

	
\begin{proof}
Suppose to the contrary that   $C_7(1,3)$ has a total $5$-colouring. Since the maximum independent set in $C_7(1,3)$ has size $2$, then there are  $|V_{j}|=1$ $(1\leq j\leq5)$ by Lemma \ref{Lem1},  a contradiction to $\sum_{1\leq j\leq5}|V_{j}|=7$.  Thus, $C_7(1,3)$ does not have a total 5-colouring. Instead, $\chi''(C_{7}(1,3)) \geq 6$.
On the other hand, according to Lemma \ref{Lem1T}, $\chi''(C_{7}(1,3)) \leq 6$. There must be
  $\chi''(C_{7}(1,3)) = 6$.
   \end{proof}

\subsection{Total colouring of  $C_{12}(1, 3)$  }
The total chromatic number of $C_{12}(1, 3)$ has been obtained in Ref. \cite{Far23,Nig21}. For completeness, we restate it here.
\begin{lemma}
$\chi''(C_{12}(1,3)) = 6$.
\label{Lem12}
\end{lemma}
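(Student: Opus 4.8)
The plan is to show $\chi''(C_{12}(1,3)) = 6$ by excluding the possibility of a total $5$-colouring, mirroring the strategy used for $C_7(1,3)$ in Lemma \ref{Lem7}. By Lemma \ref{Lem1T} we already have $\chi''(C_{12}(1,3)) \le 6$, so the entire task reduces to proving $\chi''(C_{12}(1,3)) \ge 6$, i.e.\ that no total $5$-colouring exists. The natural tool is Lemma \ref{Lem1}: if $C_{12}(1,3)$ had a total $5$-colouring, then it would admit a vertex-colouring with colours $1,\dots,5$ in which every colour class $V_j$ satisfies $|V_j| \equiv |V(C_{12}(1,3))| = 12 \equiv 0 \pmod 2$, that is, each $|V_j|$ is even.

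First I would examine the parity constraint forced by Lemma \ref{Lem1}. Since the five classes must partition the $12$ vertices into blocks of even size summing to $12$, the possible size-multisets are limited (for instance $\{4,4,4,0,0\}$, $\{4,4,2,2,0\}$, $\{4,2,2,2,2\}$, $\{2,2,2,2,4\}$, etc., together with the degenerate distributions where some classes are empty). The key structural fact to pin down is the maximum size of an independent set in $C_{12}(1,3)$: each colour class $V_j$ must be an independent set, so no class can exceed the independence number $\alpha(C_{12}(1,3))$. I would compute $\alpha(C_{12}(1,3))$ directly; unlike $C_7(1,3)$ where $\alpha = 2$ made the argument immediate, here $\alpha$ will be larger (I expect around $4$), so the pure counting contradiction of Lemma \ref{Lem7} will \emph{not} suffice on its own, and the parity constraint alone does not yield an immediate contradiction.

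Consequently the main obstacle is that, unlike the $C_7$ case, a naive count does not close the argument: there do exist partitions of $12$ into five even parts each at most $\alpha$. So I would have to work harder and combine the parity condition with the finer structure of $C_{12}(1,3)$. The plan is to enumerate, up to the rotational symmetry of the circulant graph, the independent sets of each admissible even size, and then argue that no collection of five such sets can simultaneously partition $V$ and extend to a \emph{proper total} $5$-colouring of the edges — the existence of the vertex-colouring is necessary but the edges at each vertex impose that the colours on the four incident edges, together with the vertex colour, exhaust all five colours, which is a stringent compatibility requirement. In practice I expect this to reduce to a short but careful case analysis (possibly assisted by a computer search over the finitely many rotation-classes of even independent partitions) showing every candidate fails. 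Since the result is already established in the literature \cite{Far23,Nig21}, an acceptable alternative — and the one I would actually adopt for a self-contained but economical treatment — is simply to cite those references and note that the exclusion of a total $5$-colouring follows by the same Lemma \ref{Lem1} parity/independence argument applied to $C_{12}(1,3)$, deferring the full case analysis to the cited works.
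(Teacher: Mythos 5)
The paper itself gives no proof of this lemma: it simply cites \cite{Far23,Nig21} and restates the result for completeness. Your fallback option --- citing those references --- is therefore exactly what the paper does, so in that sense your proposal matches the paper's approach. One correction to your reasoning, though: the Lemma \ref{Lem1} parity/independence route cannot be made to work for $C_{12}(1,3)$ even in principle, because the graph is bipartite (both chord lengths $1$ and $3$ are odd), so $\alpha(C_{12}(1,3))=6$ and the partition of the $12$ vertices into even independent classes (e.g.\ sizes $6,6,0,0,0$) exists; the graph is conformable, and the actual exclusion of a total $5$-colouring in \cite{Far23,Nig21} rests on a direct structural/computational case analysis, not on conformability. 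Your closing sentence, which asserts that the exclusion ``follows by the same Lemma \ref{Lem1} parity/independence argument,'' therefore overstates what that lemma delivers; if you cite, cite cleanly without attributing the wrong mechanism, and if you want a self-contained proof you would need to carry out the full enumeration you sketch, which is substantially more work than in the $C_7$ case.
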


\subsection{Total colouring of  $C_{17}(1, 3)$  }
\begin{lemma}
$\chi''(C_{17}(1,3)) = 6$.
\label{Lem17}
\end{lemma}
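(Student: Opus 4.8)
The plan is to show $\chi''(C_{17}(1,3)) = 6$ by the same two-pronged strategy that worked for $C_7(1,3)$: since Lemma~\ref{Lem1T} already gives $\chi''(C_{17}(1,3)) \le 6$, it suffices to rule out a total $5$-colouring. I would argue by contradiction, assuming such a colouring exists, and then invoke Lemma~\ref{Lem1} to obtain a vertex-colouring with colours $1,\dots,5$ in which every colour class $V_j$ satisfies $|V_j| \equiv |V(G)| \pmod 2$. Since $|V(C_{17}(1,3))| = 17$ is odd, each of the five classes $|V_1|,\dots,|V_5|$ must be odd, and they sum to $17$. The first task is therefore a parity/counting analysis: five odd numbers summing to $17$ means the multiset of class sizes is a partition of $17$ into exactly five odd parts (e.g.\ $(1,1,1,1,13)$, $(1,1,3,3,9)$, $(3,3,3,3,5)$, etc.), subject to the constraint that each $V_j$ is an independent set in $C_{17}(1,3)$.

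The key structural input I would develop next is the maximum size $\alpha$ of an independent set in $C_{17}(1,3)$, together with finer information about how large independent sets can look. Unlike $C_7(1,3)$, where $\alpha = 2$ forced an immediate contradiction, here $\alpha$ is larger, so no single crude bound kills all partitions at once. Instead I would compute $\alpha(C_{17}(1,3))$ explicitly and, more importantly, bound the \emph{total} number of vertices that can be packed into five disjoint independent sets of odd size. The adjacency structure is that $v_i$ is joined to $v_{i\pm1}$ and $v_{i\pm3}$; an independent set must avoid both consecutive indices and indices differing by $3$ (mod $17$). I would determine $\alpha$ by a direct argument on this circulant pattern and then show that the constraint ``five disjoint odd-sized independent classes partitioning all $17$ vertices'' is infeasible — either because the parts forced by the partition exceed $\alpha$, or because two or more classes would need to be simultaneously large in a way that the cyclic gap conditions (avoiding differences $1$ and $3$) cannot accommodate.

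The main obstacle I anticipate is precisely this last step: a clean bound on $\alpha$ alone may not suffice, because several admissible partitions of $17$ into five odd parts have every part at most $\alpha$. I therefore expect the heart of the argument to be a more delicate \emph{packing} lemma — showing that the five independent sets cannot coexist. The natural tool is a weighting or discharging argument over the $17$ vertices arranged on a cycle: each large independent class must leave sizeable ``forbidden'' neighbourhoods, and summing these exclusions across all five classes should overrun the available $17$ vertices, yielding the contradiction. Concretely, I would tally, for a hypothetical class of size $s$, the number of distinct vertices it blocks (itself plus forced exclusions from the $\pm1,\pm3$ adjacencies), and argue that the partitions surviving the simple $\alpha$-bound still violate this global count. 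Once every odd-part-partition of $17$ is eliminated, the assumed total $5$-colouring is impossible, so $\chi''(C_{17}(1,3)) \ge 6$, and combined with the upper bound this gives $\chi''(C_{17}(1,3)) = 6$.
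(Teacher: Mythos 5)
There is a genuine gap: the strategy you outline cannot be completed, because the obstruction you hope to find does not exist. You propose to derive the contradiction purely at the vertex level, by showing that $C_{17}(1,3)$ cannot be partitioned into five independent sets of odd size. But such a partition does exist: for instance
\[
\{v_0,v_2,v_6,v_8,v_{12}\},\quad \{v_1,v_5,v_{10}\},\quad \{v_3,v_9,v_{14}\},\quad \{v_4,v_{11},v_{16}\},\quad \{v_7,v_{13},v_{15}\}
\]
is a partition of $V(C_{17}(1,3))$ into five independent sets of sizes $5,3,3,3,3$, all odd (one checks that within each set no two indices differ by $1$ or $3$ modulo $17$). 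So the necessary condition supplied by Lemma~\ref{Lem1} is satisfiable, and no amount of counting, discharging, or packing of independent sets alone will produce a contradiction. The same remark applies to the cruder version of your plan based only on the independence number: $\alpha(C_{17}(1,3))=7$, and several partitions of $17$ into five odd parts each at most $7$ survive.

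The missing idea is that you must bring the \emph{edge} colours into play. In a total $5$-colouring of a $4$-regular graph, the closed star of any vertex $v$ (the vertex together with its four incident edges) consists of five pairwise conflicting elements, so all five colours appear there. Hence for each colour $j$, every vertex not in $V_j$ is covered by an edge of colour $j$, and no edge of colour $j$ may touch a vertex of $V_j$; the colour-$j$ edges therefore form a matching covering $V\setminus V_j$ and avoiding the closed neighbourhood of $V_j$. This is the engine of the paper's proof: after using Lemma~\ref{Lem1} and $\alpha=7$ to reduce to $|V_1|\in\{7,5\}$ (where $V_1$ is a largest class), the paper classifies $V_1$ by its cyclic gap sequence $(d_{11},d_{12},\dots)$, shows a run $(2,2,2)$ of gaps is impossible (four vertices $v_0,v_2,v_4,v_6\in V_1$ leave no element of the star of $v_3$ available for colour $1$), and in the remaining cases $(2,2,4),(2,2,5),(2,4,2),(2,4,4)$ propagates the forced colour-$1$ edges until some vertex's star cannot receive colour $1$. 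Your write-up contains none of this edge-propagation mechanism, and without it the proof cannot close.
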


\begin{proof}
Suppose to the contrary that  $C_{17}(1,3)$ has a total $5$-colouring.  The vertices   coloured $j$ is explicitly denoted as $V_j=\{v_{j_{0}},v_{j_{1}},\cdots\,v_{j_{(t-1)}}\}$, $1\leq j\leq5$. Without  loss of generality, we assume $|V_{j_{1}}|\geq|V_{j_{2}}|$  for $1\leq j_{1}\leq j_{2}\leq5$. We further define  $d_{js}$=($n+j_{s+1}-j_s$)~(mod~$n$) (indices of $j$ are  modulo $t$), $0\leq s\leq t-1$. Then, there is $d_{js}\in\{2,4,5,6,7,8,9,10,11,12,13,15\}$ for $C_{17}(1,3)$. Since the maximun independent set in $C_{17}(1,3)$ has size $7$, then $|V_{j}|\in \{7,5,3,1\}(1\leq j\leq5)$  by the Lemma \ref{Lem1}. From $\Sigma_{1\leq j\leq5}|V_j|=17$, we have $|V_1|=7$ or $|V_1|=5$.

If $|V_1| = 7$, then $|\{d_{1s}\mid d_{1s}=2\}|\leq 6$. However, when $|\{d_{1s}\mid d_{1s}=2\}|\leq 4$, there is $\sum_{1\leq s\leq7}d_{1s}\geq 2\times 4+4\times 3=20>17$, a contradiction to $\sum_{1\leq s\leq7}d_{1s}=17$. When $|\{d_{1s}|d_{1s}=2\}|\geq5$, there must be an integer $s$ such that $(d_{1s},d_{1(s+1)},d_{1(s+2)})=(2,2,2)$. Without  loss of generality, let $v_{0},v_{2},v_{4},v_{6}\in V_{1}$. It follows  $\sigma(v_{3}), \sigma(v_{3}v_{2}),\sigma(v_{3}v_{4}),\sigma(v_{3}v_{0}),\sigma(v_{3}v_{6})$ $\neq 1$  (see Figure \ref{Fig24}(1)), a contradiction to the assumption that $C_{17}(1, 3)$  has a total 5-colouring.

If $|V_1|= 5$, then by symmetry, we need only consider $(d_{11},d_{12},d_{13})\in\{(2,2,4),(2,2,5),$ $(2,4,2),(2,4,4)\}$, since  $d_{1s}\in\{2,4,5,6,7,8,9,10,11,12,13,15\}$,  $\sum_{1\leq s\leq5}d_{1s}=17$,
 and $(d_{11},d_{12},d_{13})\neq (2,2,2)$ as demonstrated above.

Case 1. $(d_{11},d_{12},d_{13})=(2,2,4)$. Without loss of generality, we let $V_1=\{v_0,v_2,v_4,v_8\}$. Then $\sigma(v_{3}),\sigma(v_{3}v_{2}), \sigma(v_{3}v_{4})$, $\sigma(v_{3}v_{0})\neq 1$, $\sigma(v_{3}v_{6})=1$. It follows  $\sigma(v_{5}),\sigma(v_{5}v_{4}), \sigma(v_{5}v_{6}),$ $\sigma(v_{5}v_{2}),\sigma(v_{5}v_{8})\neq 1$ (see Figure \ref{Fig24}(2)), a contradiction  to the assumption.

Case 2.  $(d_{11},d_{12},d_{13})=(2,2,5)$. We  need only consider $(d_{11},d_{12},d_{13},d_{14},d_{15})=(2,2,5,2,6)$, since $(d_{11},d_{12},d_{13},d_{14},d_{15})=(2,2,5,6,2)$ has been embodied in $(d_{1s},d_{1(s+1)},$ $d_{1(s+2)})= (2,2,2)$. Let $V_1=\{v_0,v_2,v_4,v_9,v_{11}\}$. Then $\sigma(v_{1}),\sigma(v_{1}v_{0}),\sigma(v_{1}v_{2}),\sigma(v_{1}v_{4})\neq 1$, $\sigma(v_{1}v_{15})=1$. It follows $\sigma(v_{12}v_{13})=1$. We have
 $\sigma(v_{14}),\sigma(v_{14}v_{15}), \sigma(v_{14}v_{13})$, $\sigma(v_{14}v_{0}),$ $\sigma(v_{14}v_{11})\neq 1$ (see Figure \ref{Fig24}(3)), a contradiction  to the assumption.

Case 3. $(d_{11},d_{12},d_{13})=(2,4,2)$. Let $V_1=\{v_0,v_2,v_6,v_8\}$.
Then  $\sigma(v_{3}),\sigma(v_{3}v_{2}),\sigma(v_{3}v_{0}),$ $\sigma(v_{3}v_{6})\neq 1$, $\sigma(v_{3}v_{4})=1$. It follows $\sigma(v_{5}),\sigma(v_{5}v_{4}),\sigma(v_{5}v_{6}), \sigma(v_{5}v_{2}),\sigma(v_{5}v_{8})\neq 1$ (see Figure \ref{Fig24}(4)), a contradiction to the assumption.

Case 4. $(d_{11},d_{12},d_{13})=(2,4,4)$. Let $V_1=\{v_0,v_2,v_6,v_{10}\}$. Then $\sigma(v_{3}v_{4})=1$. It follows $\sigma(v_{5}v_{8})=1$. We have $\sigma(v_{7}),\sigma(v_{7}v_{6}),\sigma(v_{7}v_{8}),\sigma(v_{7}v_{4}),\sigma(v_{7}v_{10})\neq 1$  (see Figure \ref{Fig24}(5)), a contradiction  to the assumption.
\begin{figure}[h]
\centering
	\includegraphics[scale=0.68]{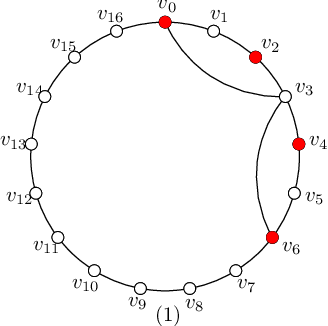}~~~~~
   \includegraphics[scale=0.68]{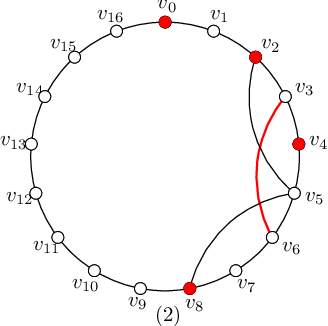}~~~~~
    \includegraphics[scale=0.68]{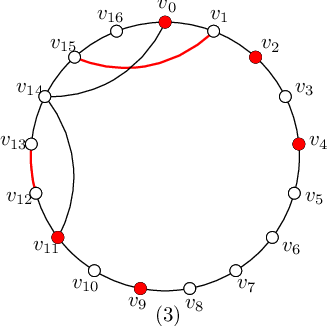}

         \vspace{5pt}
  \includegraphics[scale=0.68]{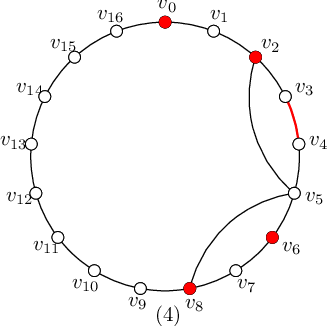}~~~~~~~~~~~
  \includegraphics[scale=0.68]{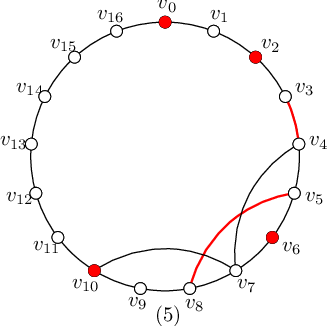}
\caption{$\sigma(C_{17}(1, 3))$ for$|V_1| =7,5$.}
\label{Fig24}
\end{figure}

 Thus, $C_{17}(1,3)$ does not have a total 5-colouring, i.e. $\chi''(C_{17}(1,3)) \geq 6$.
  However, according to Lemma \ref{Lem1T}, $\chi''(C_{17}(1,3)) \leq 6$. So,
 $\chi''(C_{17}(1,3)) = 6$.
\end{proof}

\subsection{Total colouring of  $C_{22}(1, 3)$  }

\begin{lemma}
$\chi''(C_{22}(1,3)) = 5$.
\label{Lem22}
\end{lemma}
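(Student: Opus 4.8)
The plan is to establish the upper bound $\chi''(C_{22}(1,3)) \le 5$ by exhibiting an explicit total $5$-colouring, exactly in the style of the proofs of Lemmas \ref{Lempq} and \ref{Lem51}; together with the trivial lower bound $\chi''(C_{22}(1,3)) \ge \Delta + 1 = 5$ this gives the claimed equality. Concretely, I would record the colouring as a triple $\sigma(C_{22}(1,3)) = (\sigma(V), \sigma(E_1), \sigma(E_2))$ of three length-$22$ strings over $\{1,2,3,4,5\}$, colouring the vertices $v_0,\dots,v_{21}$, the edges $v_i v_{i+1}$, and the edges $v_i v_{i+3}$ respectively (indices mod $22$), and then check the three families of constraints: adjacent vertices (indices differing by $1$ or $3$) get distinct colours; adjacent edges (sharing an endpoint) get distinct colours; and no vertex shares a colour with any of its four incident edges.

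The key idea for \emph{finding} such a string is that $22 = 2\cdot 11$, so $C_{22}(1,3)$ covers $C_{11}(1,3)$ via the map $v_i \mapsto v_{i \bmod 11}$. This is a genuine covering map of graphs: the four neighbours $v_{i\pm 1}, v_{i\pm 3}$ of any vertex stay pairwise distinct after reduction modulo $11$, since their mutual differences lie in $\{\pm 2, \pm 4, \pm 6\}$ and none of $2,4,6$ is $\equiv 0 \pmod{11}$, so $\sigma$ restricts to a bijection on each neighbourhood. Consequently, pulling back the total $5$-colouring of $C_{11}(1,3)$ supplied by Lemma \ref{Lem51} — that is, simply repeating each of its three length-$11$ strings twice — yields a valid total $5$-colouring of $C_{22}(1,3)$, because every adjacency or incidence in $C_{22}(1,3)$ projects to a true adjacency or incidence in $C_{11}(1,3)$, where the pulled-back colours already differ. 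I would therefore take $\sigma(C_{22}(1,3))$ to be the doubling of $\sigma(C_{11}(1,3)) = (25354543431,\, 12121212123,\, 43435354545)$.

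With the colouring fixed, what remains is the routine verification of all constraints. The only point that looks potentially delicate — and hence the main obstacle, if one insists on a direct self-contained check rather than the covering argument — is the behaviour at the two junctions where the period-$11$ pattern wraps (around indices $10$–$11$ and $21$–$0$), since the distance-$3$ edges $v_i v_{i+3}$ straddle these positions. The covering-map viewpoint removes this difficulty entirely: because $11 \mid 22$ there is in fact no seam, the colouring is exactly $11$-periodic, and every local configuration in $C_{22}(1,3)$ is merely a shifted copy of one already verified in $C_{11}(1,3)$. Thus the whole argument reduces to the single observation that a valid total colouring lifts along a covering map, after which $\chi''(C_{22}(1,3)) = 5$ follows immediately.
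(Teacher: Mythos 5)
Your proposal is correct, and it reaches the same upper bound as the paper does --- via an explicit total $5$-colouring plus the trivial lower bound $\chi''\ge\Delta+1=5$ --- but by a genuinely different route. The paper simply writes down an ad hoc triple of length-$22$ strings for $C_{22}(1,3)$ (which is \emph{not} the doubling of its $C_{11}$ colouring) and asserts that the constraints hold, leaving a fresh $22$-vertex verification. You instead observe that reduction of indices modulo $11$ is a covering map $C_{22}(1,3)\to C_{11}(1,3)$: it sends edges to edges (as $1,3\not\equiv 0\pmod{11}$) and is injective on each neighbourhood (the pairwise differences of $\pm1,\pm3$ are $\pm2,\pm4,\pm6$, none divisible by $11$), so every pair of adjacent or incident elements of $C_{22}(1,3)$ projects to a pair of distinct adjacent or incident elements of $C_{11}(1,3)$, and the total $5$-colouring of Lemma \ref{Lem51} pulls back to one of $C_{22}(1,3)$. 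This is valid and buys two things the paper's proof does not: the verification reduces entirely to the already-checked $C_{11}$ case (no new case analysis at the ``seams,'' since the lifted colouring is exactly $11$-periodic), and the argument generalizes --- the same reasoning shows $\chi''(C_{km}(1,3))=5$ for every $k\ge1$ whenever $C_m(1,3)$ ($m\ge7$) admits a total $5$-colouring, which would also subsume, for instance, the multiples of the base cases treated in Lemma \ref{Lempq}. The paper's approach, by contrast, is self-contained for this single value of $n$ and requires no covering-space observation, only a direct (if tedious) check.
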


\begin{proof}
We construct  $\sigma(C_{22}(1, 3))$  as follows:

 \begin{small}
      \noindent$\sigma(C_{22}(1, 3))=(2545353434545353124341, 1212121212121212312124,
      3434545353434545453535)$.
\end{small}

In this construction,  
every vertex and edge of $C_{22}(1,3)$ receives a colour from $\{1,2,3,4,5\}$, with adjacent vertices receiving distinct colours, adjacent edges receiving distinct colours, and no vertex sharing a colour with any of its incident edges. 
Hence, it gives a total 5-colouring of $C_{22}(1,3)$, implying $\chi''(C_{22}(1,3)) \leq 5$.  
On the other hand, $\chi''(C_{22}(1,3)) \geq 5$. Therefore, $\chi''(C_{22}(1,3)) = 5$.
\end{proof}
Figure \ref{Figc22} shows $\sigma(C_{22}(1,3))$.
\begin{figure}[h]
	\centering
	\includegraphics[scale=0.71]{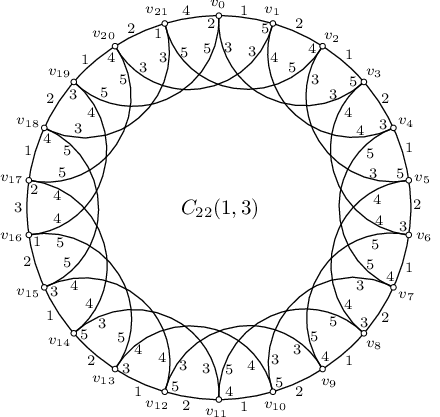}
    	\caption{$\sigma(C_{22}(1,3))$. }
	\label{Figc22}
\end{figure}

\section{Total colouring of  $C_{n}(1, 3)$ for $n=8,13$ }\label{Sec53}

In this section, we  study the total colouring of  $C_{n}(1, 3)$ for $n=8,13$.

\subsection{Total colouring of  $C_{8}(1, 3)$ }

\begin{lemma}
$\chi''(C_{8}(1,3)) = 6$.
\label{Lem8}
\end{lemma}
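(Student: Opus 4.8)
The plan is to exploit the very special structure of $C_8(1,3)$. First I would observe that this graph is bipartite: since both connection lengths $1$ and $3$ are odd, every edge joins a vertex of even index to one of odd index, so the parity classes $\{v_0,v_2,v_4,v_6\}$ and $\{v_1,v_3,v_5,v_7\}$ form the two sides. Checking adjacencies then shows each even-indexed vertex is joined to all four odd-indexed vertices (for instance $v_2$ is adjacent to $v_1,v_3,v_5,v_7$), so in fact $C_8(1,3)\cong K_{4,4}$. As in the earlier cases, Lemma \ref{Lem1T} already gives $\chi''(C_8(1,3))\le 6$, so it suffices to rule out a total $5$-colouring, i.e. to prove $\chi''(C_8(1,3))\ge 6$.

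To do this I would use a counting (density) bound rather than the parity criterion of Lemma \ref{Lem1}. The key step is to bound the size of a single colour class. A colour class of a total colouring is a set $S\subseteq V\cup E$ no two of whose elements are adjacent or incident; equivalently $S=I\cup M$, where $M$ is a matching, $I$ is an independent set of vertices, and no vertex of $I$ is covered by $M$. If $M$ has $m$ edges then, because the graph is $K_{4,4}$, these edges cover exactly $m$ vertices in each part, and the uncovered vertices induce a copy of $K_{4-m,4-m}$, whose largest independent set has size $4-m$ (an independent set of a complete bipartite graph lies inside one part). Hence $|S|=|I|+|M|\le (4-m)+m=4$, so \emph{every} colour class of a total colouring of $C_8(1,3)$ has at most four elements.

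Finally I would compare this with the number of elements to be coloured. The graph has $|V|+|E|=8+16=24$ vertices and edges. A total $5$-colouring would partition these $24$ elements into five colour classes, each of size at most $4$, covering at most $5\times 4=20<24$ elements — a contradiction. Therefore $C_8(1,3)$ admits no total $5$-colouring, so $\chi''(C_8(1,3))\ge 6$; combined with Lemma \ref{Lem1T} this yields $\chi''(C_8(1,3))=6$.

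I expect the main obstacle to be the class-size bound, i.e. proving that no total independent set of $C_8(1,3)$ exceeds four elements; but once the identification with $K_{4,4}$ is made this reduces to the simple observation that independent sets of a complete bipartite graph are confined to one side, after which the pigeonhole count is immediate. An alternative route through Lemma \ref{Lem1} is possible — since $|V(C_8(1,3))|=8$ is even, every colour class would have even size and hence lie in $\{0,2,4\}$ with the five sizes summing to $8$ — but this leaves several distributions, such as $(4,4,0,0,0)$, $(4,2,2,0,0)$ and $(2,2,2,2,0)$, to be excluded individually, so the density bound above is the cleaner argument.
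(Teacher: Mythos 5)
Your proof is correct. You and the paper both begin with the same key observation, namely that $C_8(1,3)\cong K_{4,4}$ (each even-indexed vertex is joined by the $\pm1$ and $\pm3$ chords to all four odd-indexed vertices). The difference is what happens next: the paper simply cites the known value $\chi''(K_{4,4})=6$ from \cite{Beh67}, whereas you prove the lower bound from scratch by bounding the total independence number. Your counting argument is sound: a colour class is $I\cup M$ with $M$ a matching of size $m$ covering $m$ vertices on each side, the uncovered vertices induce $K_{4-m,4-m}$ whose independent sets lie in one part, so every colour class has at most $(4-m)+m=4$ elements, and five classes cannot cover all $8+16=24$ elements. This is in fact the standard proof that $\chi''(K_{n,n})=n+2$, so your argument is more self-contained and slightly more general than the paper's citation, at the cost of a few extra lines; the upper bound via Lemma \ref{Lem1T} is handled identically in both. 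Your closing remark is also apt that the parity criterion of Lemma \ref{Lem1} alone would not finish the job here, since $|V|=8$ is even and several even class-size distributions would remain to be excluded, which is precisely why the density bound (or the citation) is the right tool for $n=8$.
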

Since $C_{8}(1,3)$ is isomorphic to $K_{4,4}$, of which the total chromatic number is $6$ \cite{Beh67}, the total chromatic number of $C_{8}(1,3)$ is $6$ too.

\subsection{Total colouring of  $C_{13}(1, 3)$ }
We use the same expressions of $V_{j}$ and $d_{js}$ as  those in the proof of Lemma \ref{Lem17}. There is $d_{js}\in \{2,4,5,6,7,8,9,11\}$ for $C_{13}(1,3)$.

\begin{lemma}
If~ $C_{13}(1, 3)$ has a total $5$-colouring,
 then $|V_{1}|=|V_{2}|=|V_{3}|=|V_{4}|= 3$ and $|V_{5}|=1$.
\label{Lem71}
\end{lemma}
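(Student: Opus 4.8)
The plan is to use the constraint from Lemma~\ref{Lem1} together with the size of a maximum independent set to pin down the colour-class sizes uniquely. Since $n=13$ is odd, Lemma~\ref{Lem1} forces every colour class $V_j$ to have odd cardinality, and since the maximum independent set in $C_{13}(1,3)$ has size $5$, each $|V_j|$ lies in $\{1,3,5\}$. Thus the five class sizes are odd numbers from $\{1,3,5\}$ summing to $13$. First I would enumerate the partitions of $13$ into five odd parts drawn from $\{1,3,5\}$, arranged in non-increasing order under the convention $|V_{j_1}|\geq|V_{j_2}|$; these are $(5,5,1,1,1)$, $(5,3,3,1,1)$, and $(3,3,3,3,1)$. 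The goal is to rule out the first two and leave only $(3,3,3,3,1)$, which is exactly the claimed distribution.

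The main work is therefore to eliminate the cases $|V_1|=5$ and the case where two classes have size $5$. Here I would reuse the $d_{js}$ machinery from the proof of Lemma~\ref{Lem17}: for a class $V_j$ of size $t$, the cyclic gaps $d_{js}$ satisfy $\sum_s d_{js}=13$ and each $d_{js}\in\{2,4,5,6,7,8,9,11\}$, since no gap can create an edge of $C_{13}(1,3)$ within a colour class (gaps of $1$ and $3$ are forbidden as they correspond to adjacencies). For a class of size $5$, the five gaps are each at least $2$ and sum to $13$, so they are tightly constrained; I would argue that the forced occurrence of several small gaps equal to $2$ creates a short run $v_0,v_2,v_4,\dots$ all coloured $j$, and then examine the vertex sitting between two such monochromatic neighbours (as in Figure~\ref{Fig24}) to derive a contradiction with the existence of a proper total $5$-colouring. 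The case of a size-$5$ class is the crux, and it should be dispatched by the same local obstruction argument used for $C_{17}(1,3)$: a vertex adjacent (via distance $1$ and distance $3$) to four vertices of colour $j$, together with its own forbidden colour, exhausts the five available colours.

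Concretely, for the distribution $(5,5,1,1,1)$ I would first eliminate the possibility of a size-$5$ class outright, which simultaneously kills both $(5,5,1,1,1)$ and $(5,3,3,1,1)$ and finishes the proof in one stroke. To show no colour class can have size $5$: with five gaps summing to $13$, each at least $2$, a counting argument shows at least two gaps must equal $2$, and a finer analysis of the admissible gap-sequences (those avoiding the $(2,2,2)$ run and otherwise respecting $d_{js}\in\{2,4,5,6,7,8,9,11\}$) leaves only a few patterns up to cyclic rotation and reflection. Each such pattern places three or four class members close enough that some external vertex is forced to avoid colour $j$ on itself and on all four incident edges, contradicting that it must carry one of the five colours. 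I expect this gap-sequence case analysis to be the principal obstacle, since it requires care to confirm that every admissible arrangement of a size-$5$ class yields such a saturated vertex; once size $5$ is excluded, only $(3,3,3,3,1)$ survives, giving $|V_1|=|V_2|=|V_3|=|V_4|=3$ and $|V_5|=1$ as asserted.
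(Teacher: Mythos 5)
Your proposal is correct and follows essentially the same route as the paper: Lemma~\ref{Lem1} together with the independence number $5$ restricts each $|V_j|$ to $\{1,3,5\}$, and a size-$5$ class is excluded by the cyclic-gap argument plus the saturated-vertex obstruction, which simultaneously rules out $(5,5,1,1,1)$ and $(5,3,3,1,1)$. The only refinement is that the count is sharper than you state: since no gap can equal $1$ or $3$, five gaps from $\{2,4,5,\dots\}$ summing to $13$ force exactly four gaps equal to $2$ (and one equal to $5$), so the case analysis collapses to the single configuration $V_1=\{v_0,v_2,v_4,v_6,v_8\}$ and the vertex $v_3$ yields the contradiction immediately, exactly as in the paper.
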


\begin{proof}
 If~ $C_{13}(1, 3)$ has a total $5$-colouring,  then by Lemma \ref{Lem1}, $|V_{j}|\in \{5,3,1\}(1\leq j\leq5) $ since the maximum independent set in $C_{13}(1,3)$ has size $5$.

We first consider the case of $|V_{1}| =5$. Since  $d_{1s}\in \{2,4,5,6,7,8,9,11\}$ and $\Sigma_{1\leq s\leq5}d_{1s}=13$, we have $|\{d_{1s} \mid d_{1s}=2\}|=4$.
         Without  loss of generality, let $V_{1}=\{v_{0},v_2,v_4,v_6,v_8\}$. 
        We then have  $\sigma(v_{3}),\sigma(v_{3}v_{2})$, $\sigma(v_{3}v_{4}),\sigma(v_{3}v_{0}),\sigma(v_{3}v_{6})\neq 1$ (see Figure \ref{Fig13-1}), a contradiction  to the precondition that $C_{13}(1,3)$ has a total $5$-colouring.
\begin{figure}[h]
 \centering
   \includegraphics[scale=0.68]{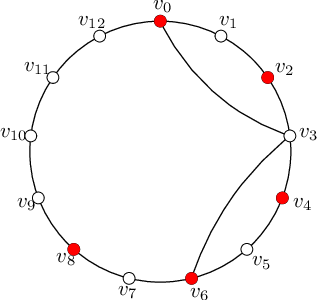}
  \caption{$\sigma(C_{13}(1, 3))$ for $|V_1| =5$.}
	 \label{Fig13-1}
\end{figure}

Hence, $|V_1|= 3$.  From $\Sigma_{1\leq j\leq5}|V_j|=13$, we have
$|V_1|= |V_2|=|V_3|=|V_4|=3$ and $|V_5|=1$. 
\end{proof}

\begin{lemma}
If~ $C_{13}(1, 3)$ has a total $5$-colouring, then
 $(\{d_{j1},d_{j2},d_{j3}\},c_{j})$ $\in$ $\{(\{2,4,7\},4),$ $(\{2,5,6\},3),(\{2,5,6\},$ $4),(\{4,4,5\},3)\}$  and there are  at least three colours $j$ such that    $(\{d_{j1},d_{j2},d_{j3}\},c_{j})\in\{(\{2,5,6\},3),(\{4,4,5\},3)\}$ for $1\leq j\leq 4$, where $c_{j}$ is the number of chords  coloured $j$.
 \label{Lem72}
\end{lemma}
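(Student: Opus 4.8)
The plan is to recast the statement as a matching-counting problem on the two edge-classes $E_1$ (cycle edges) and $E_2$ (chords), and then to close everything with a single global count of the $13$ chords.

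First I would record the structure forced by a total $5$-colouring of the $4$-regular graph $C_{13}(1,3)$. At every vertex $v$ the five elements consisting of $v$ and its four incident edges are pairwise adjacent or incident, so they receive all five colours exactly once. Hence, for each colour $j$, the set $M_j$ of edges coloured $j$ is a matching saturating \emph{exactly} $V\setminus V_j$: every vertex outside $V_j$ meets precisely one $j$-edge, and every vertex of $V_j$ meets none. By Lemma \ref{Lem71}, $|V_j|=3$ for $1\le j\le 4$, so $|M_j|=\tfrac{1}{2}(13-3)=5$. Writing $c_j=|M_j\cap E_2|$ for the number of chords coloured $j$ (so the cycle edges coloured $j$ number $5-c_j$), the fact that the five colour classes partition the $13$ chords gives the key identity $\sum_{j=1}^{5}c_j=|E_2|=13$.

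Next I would determine, type by type, which values of $c_j$ are even possible for a single class. Since $d_{js}\in\{2,4,5,6,7,8,9,11\}$ and $d_{j1}+d_{j2}+d_{j3}=13$, an elementary enumeration gives $\{d_{j1},d_{j2},d_{j3}\}\in\{\{2,2,9\},\{2,4,7\},\{2,5,6\},\{4,4,5\}\}$. For each placement of $V_j$ I would list the edges of $C_{13}(1,3)$ avoiding $V_j$ and enumerate all matchings saturating $V\setminus V_j$; the forcing is strong because any internal vertex of a short arc has only chords available to it. This should yield: $\{2,2,9\}$ admits only $c_j=5$; $\{2,4,7\}$ admits only $c_j=4$; $\{2,5,6\}$ admits $c_j\in\{3,4\}$; and $\{4,4,5\}$ admits $c_j\in\{3,5\}$. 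In particular $c_j\ge 3$ in every case.

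Finally I would let the global count finish the argument. Because $c_5\ge 0$, the identity $\sum_{j=1}^{5}c_j=13$ forces $\sum_{j=1}^{4}c_j\le 13$; together with $c_j\ge 3$ this yields $\sum_{j=1}^{4}c_j\in\{12,13\}$ and, crucially, rules out $c_j=5$ for any $j$, since one value $5$ would already push the sum to at least $5+3+3+3=14$. This simultaneously discards the type $\{2,2,9\}$ (whose only matching has $c_j=5$) and the branch $c_j=5$ of type $\{4,4,5\}$, leaving exactly the four admissible pairs $(\{2,4,7\},4),(\{2,5,6\},3),(\{2,5,6\},4),(\{4,4,5\},3)$. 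Moreover, each of the four $c_j$ now lies in $\{3,4\}$ with sum at most $13$, so at most one of them equals $4$; hence at least three colours satisfy $c_j=3$, and since $c_j=3$ occurs only for the types $\{2,5,6\}$ and $\{4,4,5\}$, the second assertion follows. I expect the main obstacle to be the per-type matching enumeration, and especially the uniform lower bound $c_j\ge 3$: establishing it requires carefully tracking the forced chords around each arc configuration (a figure in the spirit of those in the proof of Lemma \ref{Lem17} would help here). Once that local bound is secured, the elimination of the spurious cases and the counting conclusion are immediate.
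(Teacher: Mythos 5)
Your proposal is correct and follows essentially the same route as the paper: the same enumeration of the four types $\{2,2,9\},\{2,4,7\},\{2,5,6\},\{4,4,5\}$, the same per-type determination of the possible values of $c_j$ (whose outcomes you state correctly, matching the paper's Cases 1--4), and the same closing count $\sum_{1\le j\le 4}c_j\le 13$ to exclude $c_j=5$ and force at least three classes with $c_j=3$. The only part you defer --- the forced-chord enumeration for each arc configuration --- is exactly the bulk of the paper's written proof, and your anticipated results for it are all accurate.
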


\begin{proof}
If~ $C_{13}(1, 3)$ has a total $5$-colouring, then by Lemma \ref{Lem71},
 $|V_j|= 3$ for $1\leq j\leq 4$.  Since  $d_{js}\in \{2,4,5,6,7,8,9,11\}$ and  $\sum_{1\leq s\leq3}d_{js}=13$, we have $\{d_{j1},d_{j2},d_{j3}\}\in\{\{2,2,9\},\{2,4,7\},\{2,5,6\},\{4,4,5\}\}$.

	Case 1. $\{d_{j1},d_{j2},d_{j3}\}=\{2,2,9\}$.
	We may let $V_j=\{v_{0},v_2,v_4\}$.
Then $\sigma(v_{1}),\sigma(v_{1}v_{0}),$ $\sigma(v_{1}v_{2}), \sigma(v_{1}v_{4})\neq j$, $\sigma(v_{1}v_{11})=j$. It follows $\sigma(v_{12}v_{9})=j$, $\sigma(v_{10}v_{7})=j$, $\sigma(v_{8}v_{5})=j$, $\sigma(v_{6}v_{3})=j$ and $c_{j}=5$ (see  Figure \ref{Figd229-247-256-445}(1)).

    Case 2. $\{d_{j1},d_{j2},d_{j3}\}=\{2,4,7\}$.
Let $V_j=\{v_{0},v_2,v_6\}$.
Then $\sigma(v_{3}),\sigma(v_{3}v_2),\sigma(v_{3}v_0),$ $\sigma(v_3v_6) \neq  j$, $\sigma(v_{3}v_{4})=j$. It follows $\sigma(v_{1}v_{11})=j$, $\sigma(v_{12}v_{9})=j$, $\sigma(v_{10}v_{7})=j$, $\sigma(v_{8}v_{5})=j$ and $c_{j}=4$  (see  Figure \ref{Figd229-247-256-445}(2)).	

    Case 3. $\{d_{j1},d_{j2},d_{j3}\}=\{2,5,6\}$. Let  $V_j=\{v_{0},v_2,v_7\}$.
	Then $\sigma(v_1),\sigma(v_1v_0),\sigma(v_1v_2) \neq  j$,  $\sigma(v_{1}v_{11})=j$ or $\sigma(v_1v_4) = j$.
	If $\sigma(v_{1}v_{11}) = j$, then  $\sigma(v_{12}v_{9}) = j$. It follows $\sigma(v_{10}),$ $\sigma(v_{10}v_{11}),\sigma(v_{10}v_{9}),\sigma(v_{10}v_{0}), \sigma(v_{10}v_{7})\neq  j$ (see Figure \ref{Figd229-247-256-445}(3)),  a contradiction  to the precondition. 	If $\sigma(v_1v_4) = j$, then
$\sigma(v_3v_6)=j$, $\sigma(v_5v_8) = j$. It follows
 $\sigma(v_9v_{10}) = j$,  $\sigma(v_{11}v_{12}) = j$ and $c_{j}=3$   (see Figure \ref{Figd229-247-256-445}(4))
	or $\sigma(v_9v_{12}) = j$,  $\sigma(v_{10}v_{11}) = j$ and $c_{j}=4$ (see Figure \ref{Figd229-247-256-445}(5)).	

    Case 4. $\{d_{j1},d_{j2},d_{j3}\}=\{4,4,5\}$. Let $V_j=\{v_{0},v_4,v_8\}$.
Then $\sigma(v_3),\sigma(v_3v_0),\sigma(v_3v_4) \neq  j$, $\sigma(v_{3}v_{2})=j$
or $\sigma(v_{3}v_{6})=j$.  If  $\sigma(v_3v_2) = j$, then  $\sigma(v_1v_{11}) = j$, $\sigma(v_{12}v_{9}) = j$,   $\sigma(v_{10}v_{7}) = j$, $\sigma(v_6v_{5}) = j$ and  $c_{j}=3$ (see Figure \ref{Figd229-247-256-445}(6)).
  If $\sigma(v_3v_6) = j$, then  $\sigma(v_5v_2) = j$, $\sigma(v_1v_{11}) = j$, $\sigma(v_{12}v_{9}) = j$,   $\sigma(v_{10}v_{7}) = j$ and $c_{j}=5$	(see Figure \ref{Figd229-247-256-445}(7)).

By Cases 1--4, we have $c_{j}\in\{3,4,5\}$ for $1\leq j\leq4$.
From $\sum_{1\leq j\leq4}c_{j}\leq13$, there is $c_{j}\leq4$,    $(\{d_{j1},d_{j2},d_{j3}\},c_{j})\in\{(\{2,4,7\},4),(\{2,5,6\},3)$, $(\{2,5,6\},4),(\{4,4,5\},3)\}$  and there are  at least three colours $j$ such that    $(\{d_{j1},d_{j2},d_{j3}\},c_{j})\in\{(\{2,5,6\},3),(\{4,4,5\},3)\}$ for $1\leq j\leq 4$.
\end{proof}
\begin{figure}[h]
   \centering
   \includegraphics[scale=0.69]{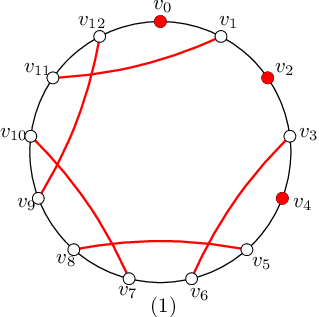}~
   \includegraphics[scale=0.69]{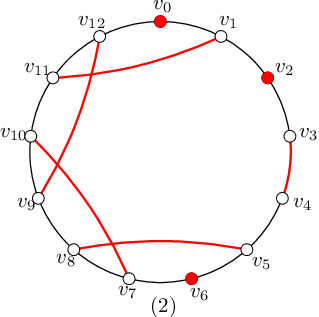}~
	\includegraphics[scale=0.69]{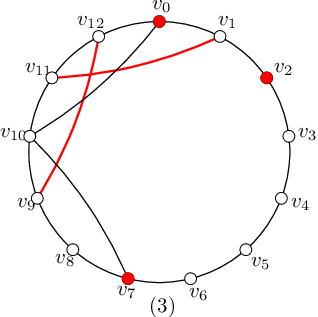}~
     \includegraphics[scale=0.69]{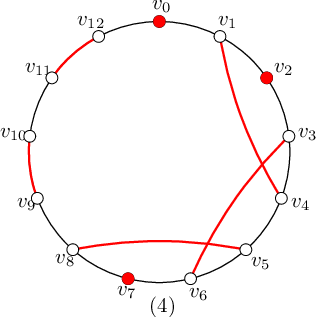}
      
         \vspace{7pt}
	~~~~~\includegraphics[scale=0.69]{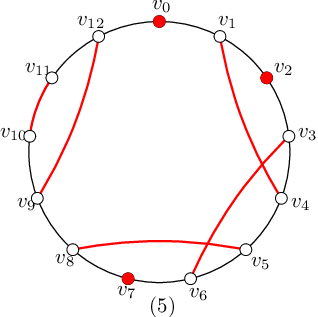}~~~~	
     \includegraphics[scale=0.69]{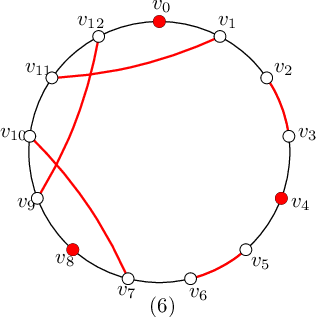}~~~~
	\includegraphics[scale=0.69]{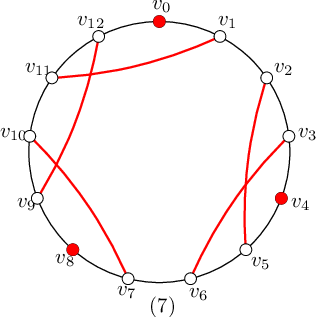}	   
	\caption{$\sigma(C_{13}(1, 3))$ for $|V_1| =3$.}
	 \label{Figd229-247-256-445}
\end{figure}
Further, according to Cases 2--4, we have  the following lemma.

\begin{lemma}
If~ $C_{13}(1, 3)$ has a total $5$-colouring, then
  $\sigma(v_{j_{s}-1}v_{j_{s}+2}) = \sigma(v_{j_{s}+1}v_{j_{s}+4})$ = $\sigma(v_{j_{s}+3}v_{j_{s}+6}) = \sigma(v_{j_{s}})$ for $d_{js}=5$
  and  $\sigma(v_{j_{s}-1}v_{j_{s}+2})$ = $\sigma(v_{j_{s}+1}v_{j_{s}+4})$ = $\sigma(v_{j_{s}+3}v_{j_{s}+6})$ = $\sigma(v_{j_{s}+5}v_{j_{s}+8})$ = $\sigma(v_{j_{s}})$ for $d_{js}=7$, where $1\leq j\leq4$ and indices of $v$ are  modulo 13.
\label{Lem773}
\end{lemma}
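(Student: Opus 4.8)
The plan is to deduce the statement directly from the case analysis already completed in Lemma~\ref{Lem72}, powered by the elementary observation that drives all of that forcing. Since $C_{13}(1,3)$ is $4$-regular and we are assuming a total $5$-colouring, at every vertex $v$ the four incident edges together with $v$ itself must display all five colours. Hence for each colour $j$ and each vertex $v\notin V_j$ \emph{exactly one} incident edge is coloured $j$, whereas no edge incident with a vertex of $V_j$ is coloured $j$. I would use this ``exactly one $j$-edge'' principle as the sole propagation rule.

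First I would observe that only the gaps $d_{js}\in\{5,7\}$ are at issue, so by Lemma~\ref{Lem72} the colour $j$ (with $1\le j\le 4$) has $|V_j|=3$ and a forced gap multiset: a gap $5$ can occur only when $\{d_{j1},d_{j2},d_{j3}\}\in\{\{2,5,6\},\{4,4,5\}\}$, and a gap $7$ only when $\{d_{j1},d_{j2},d_{j3}\}=\{2,4,7\}$. These are precisely Cases~3, 4 and Case~2 of Lemma~\ref{Lem72}. Using the rotational symmetry $v_i\mapsto v_{i+1}$ of $C_{13}(1,3)$, I may place the gap in the position it occupies in the corresponding case, so it suffices to read off the chord colours there.

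Next I would reproduce the forcing chain in each case. For a gap $7$ (Case~2, $V_j=\{v_0,v_2,v_6\}$, gap from $v_6$ to $v_0$) the unique $j$-edge at $v_3$ is a chord, and the ``exactly one $j$-edge'' principle pushes the colour $j$ successively onto $v_5v_8$, $v_7v_{10}$, $v_9v_{12}$, $v_{11}v_1$, which are exactly the four chords $v_{j_s-1}v_{j_s+2},\dots,v_{j_s+5}v_{j_s+8}$ of the statement. For a gap $5$ I would run the analogous chain in Cases~3 and 4: starting either from the vertex whose two cycle-neighbours both lie in $V_j$ (Case~3) or from the chord forced inside a flanking gap (Case~4), the principle drives $j$ onto the three chords $v_{j_s-1}v_{j_s+2}$, $v_{j_s+1}v_{j_s+4}$, $v_{j_s+3}v_{j_s+6}$. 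I would check that this happens in every surviving sub-configuration, namely both the $c_j=3$ and $c_j=4$ branches of Case~3 and both branches of Case~4, so no valid colouring escapes the conclusion.

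The main obstacle is that the forcing is genuinely \emph{non-local}: knowing $d_{js}=5$ in isolation does not pin down the three chord colours, since the chain that carries $j$ across the gap depends on edges being blocked by the \emph{neighbouring} vertices of $V_j$ (the flanking gap $2$ inside $\{2,5,6\}$, or the chord forced by a flanking gap $4$ inside $\{4,4,5\}$). This is exactly why the result must be read off from Lemma~\ref{Lem72} rather than proved from the local picture. A secondary point I would verify is the reflection $v_i\mapsto v_{-i}$, which preserves all distance-$1$ and distance-$3$ edges but sends the cyclic gap pattern $(2,5,6)$ to $(2,6,5)$, so a gap $5$ may be preceded by the gap $6$ instead of $2$; I would confirm that this reflection fixes the unordered chord triple $\{v_{j_s-1}v_{j_s+2},v_{j_s+1}v_{j_s+4},v_{j_s+3}v_{j_s+6}\}$ once the start index is transformed accordingly (and likewise for the four-chord set when $d_{js}=7$). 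This guarantees that the representative configurations chosen in Lemma~\ref{Lem72} cover every position and orientation of each gap, completing the reduction.
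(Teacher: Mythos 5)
Your proposal is correct and follows essentially the same route as the paper, which offers no separate proof of this lemma beyond the remark that it follows from Cases 2--4 of Lemma~\ref{Lem72}: you read the forced chord colours off those cases using the fact that every vertex outside $V_j$ has exactly one incident edge coloured $j$. Your explicit check that the reflection $v_i\mapsto v_{-i}$ covers the second cyclic ordering of the gap multisets $\{2,5,6\}$ and $\{2,4,7\}$ is a point the paper leaves implicit, and is a welcome addition.
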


For $1\leq j_{1},j_{2}\leq 4$, let $d^{2}_{j_{1},j_{2}}=i_{1}-i_{2}$  where $\sigma(v_{i_{1}})=\sigma(v_{i_{1}+5})=j_{1}$ and $\sigma(v_{i_{2}})=\sigma(v_{i_{2}+5})=j_{2}$.
  Then $d^{2}_{j_{1},j_{2}}\in\{1,2,3,4,6,7,9,10,11,12\}$.

\begin{lemma}
If~ $C_{13}(1, 3)$ has a total $5$-colouring,    then
$d^{2}_{j_{1},j_{2}}\in\{1,3,6,7,10,12\}$. 
\label{Lem73}
\end{lemma}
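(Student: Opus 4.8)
The plan is to exploit the rigid local structure that Lemma~\ref{Lem773} attaches to every colour class possessing a gap of length $5$. Recall from Lemma~\ref{Lem72} that at least three of the colours $1,\dots,4$ have gap multiset $\{2,5,6\}$ or $\{4,4,5\}$; for such a colour $j$ there is a \emph{unique} pair of same-coloured vertices at circular distance $5$, namely the base $v_{i}$ and $v_{i+5}$ of its gap of length $5$, so the index $i=i_{j}$ occurring in the definition of $d^{2}_{j_1,j_2}$ is well defined. By Lemma~\ref{Lem773}, the three chords $v_{i-1}v_{i+2}$, $v_{i+1}v_{i+4}$, $v_{i+3}v_{i+6}$, that is, the edges $e_{i-1}^{2}$, $e_{i+1}^{2}$, $e_{i+3}^{2}$ of $E_2$, all receive colour $j$. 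Since the remark preceding the lemma already gives $d^{2}_{j_1,j_2}\notin\{0,5,8\}$, it remains only to rule out $d^{2}_{j_1,j_2}\in\{2,4,9,11\}$.

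First I would record the one fact that drives the argument: a chord cannot carry two colours. Hence for $j_1\neq j_2$ the edge set coloured $j_1$ and the edge set coloured $j_2$ are disjoint, so in particular the index sets of their three guaranteed chords,
\[
S_{j}=\{\,i_{j}-1,\ i_{j}+1,\ i_{j}+3\,\}\pmod{13},
\]
must satisfy $S_{j_1}\cap S_{j_2}=\emptyset$. This uses only three of the (at least three) chords of each colour, so the exact value of $c_{j}$ is irrelevant, and it is the entire content of the proof.

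Next I would note that the disjointness condition depends only on the difference $d^{2}_{j_1,j_2}=i_{j_1}-i_{j_2}$, so I may take $i_{j_2}=0$, giving $S_{j_2}=\{12,1,3\}$ and $S_{j_1}=\{i_{j_1}-1,\ i_{j_1}+1,\ i_{j_1}+3\}$ with $d^{2}_{j_1,j_2}\equiv i_{j_1}\pmod{13}$. A short check of when an element of $S_{j_1}$ lands in $\{12,1,3\}$ (the three sub-cases give $i_{j_1}\in\{0,2,4\}$, $\{11,0,2\}$, $\{9,11,0\}$ respectively) shows that $S_{j_1}\cap S_{j_2}\neq\emptyset$ precisely when $i_{j_1}\in\{0,2,4,9,11\}$. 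Discarding $i_{j_1}=0$, which is already excluded, the disjointness forces $d^{2}_{j_1,j_2}\notin\{2,4,9,11\}$, and together with $d^{2}_{j_1,j_2}\notin\{0,5,8\}$ this yields $d^{2}_{j_1,j_2}\in\{1,3,6,7,10,12\}$, as claimed.

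The computations are entirely routine; the only point requiring care, and the main obstacle to stating the argument correctly, is the bookkeeping modulo $13$, together with the verification that the distance-$5$ pair underlying $i_{j}$ is unique (so that $i_{j}$, and hence $d^{2}_{j_1,j_2}$, is unambiguous) and that Lemma~\ref{Lem773} genuinely applies to each colour in play. Since $d^{2}_{j_1,j_2}$ is defined only for colours carrying a gap of length $5$, and Lemma~\ref{Lem72} guarantees at least three such colours, every instance of $d^{2}_{j_1,j_2}$ that arises falls under the analysis above.
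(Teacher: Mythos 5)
Your argument is correct and takes essentially the same route as the paper: both use Lemma~\ref{Lem773} to force the three chords $e^{2}_{i_j-1}$, $e^{2}_{i_j+1}$, $e^{2}_{i_j+3}$ to carry colour $j$, and then exclude $d^{2}_{j_1,j_2}\in\{2,4,9,11\}$ because a single chord cannot receive two colours. The paper verifies $d^{2}_{j_1,j_2}\in\{2,4\}$ directly and invokes symmetry for $\{9,11\}$, whereas you package the same check as disjointness of the index sets $S_{j_1}$ and $S_{j_2}$; this is only a cosmetic difference.
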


\begin{proof}
   Without loss of generality, let
   $\sigma(v_{0}) = \sigma(v_5) =j_{1}$.
 By Lemma \ref{Lem773},  $\sigma(v_3v_{6}) = j_{1}$.
 If $d^{2}_{j_{1},j_{2}}\in\{2,4\}$, then $\sigma(v_3v_{6}) = j_{2}$  (see Figure \ref{Figp224}), a contradiction.  So, $d^{2}_{j_{1},j_{2}}\notin\{2,4\}$. By symmetry, $d^{2}_{j_{1},j_{2}}\notin\{11,9\}$.	Hence,	$d^{2}_{j_{1},j_{2}}\in\{1,3,6,7,10,12\}$.
\begin{figure}[h]
	\centering
    \includegraphics[scale=0.69]{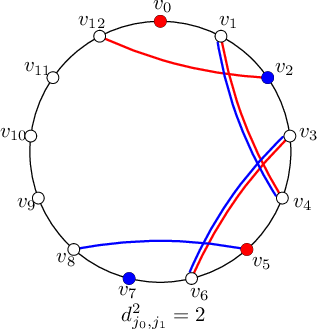}~~~~~~~~~~~~~
	\includegraphics[scale=0.69]{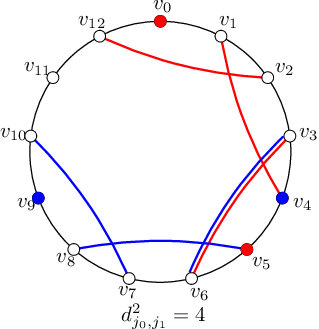}
   	\caption{$\sigma(C_{13}(1, 3))$ for $d^{2}_{j_{1},j_{2}}=2,4$.}
	\label{Figp224}
\end{figure}
\end{proof}
\begin{lemma}

$\chi''(C_{13}(1,3)) = 6$.

\label{Lem13}
\end{lemma}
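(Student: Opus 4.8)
The plan is to prove $\chi''(C_{13}(1,3))=6$ by contradiction, assuming a total $5$-colouring exists and combining all the structural constraints accumulated in Lemmas \ref{Lem71}--\ref{Lem73}. By Lemma \ref{Lem71}, such a colouring forces the vertex colour-class sizes to be $|V_1|=|V_2|=|V_3|=|V_4|=3$ and $|V_5|=1$. First I would focus on the four ``large'' colour classes $V_j$ ($1\le j\le 4$), each of which, by Lemma \ref{Lem72}, must realize one of the admissible gap-profiles $(\{d_{j1},d_{j2},d_{j3}\},c_j)\in\{(\{2,4,7\},4),(\{2,5,6\},3),(\{2,5,6\},4),(\{4,4,5\},3)\}$, with the additional counting restriction that at least three of these four classes use the ``cheap'' profiles $(\{2,5,6\},3)$ or $(\{4,4,5\},3)$, because $\sum_{1\le j\le 4}c_j\le 13$.

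The key technical step is to exploit the observation recorded just before Lemma \ref{Lem73}: each admissible profile contains a gap of length $5$ (the $\{2,5,6\}$ and $\{4,4,5\}$ cases) so that, by Lemma \ref{Lem773}, the three vertices straddling a gap of $5$ force a specific repeating pattern of chords coloured $j$. This lets me attach to each colour $j\in\{1,2,3,4\}$ a ``phase'' $i_j$ determined by $\sigma(v_{i_j})=\sigma(v_{i_j+5})=j$, and then study the pairwise differences $d^2_{j_1,j_2}=i_{j_1}-i_{j_2}$. Lemma \ref{Lem73} restricts every such difference to $\{1,3,6,7,10,12\}$ (equivalently $\pm1,\pm3,\pm6\pmod{13}$). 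The heart of the argument is a pigeonhole/parity clash: I would show that four phases $i_1,i_2,i_3,i_4$, pairwise differing only by elements of $\{1,3,6,7,10,12\}$ modulo $13$, simply cannot coexist. Concretely, one checks that the six allowed differences are closed only in a very rigid way, so requiring all $\binom{4}{2}=6$ pairwise differences to land in this set over-constrains the four residues; a short finite case analysis over the possible configurations of $\{i_1,i_2,i_3,i_4\}\subseteq\mathbb{Z}_{13}$ (there are only finitely many up to translation) eliminates every possibility.

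Where the phase argument leaves residual freedom, I would fall back on the sharper chord-colouring data from Lemma \ref{Lem72}, namely the exact values $c_j$ and the explicit chord patterns from Cases 2--4, to rule out the surviving cases. The remaining obstacle is the class $V_5$ with $|V_5|=1$: since $v_{i_5}$ cannot sit in the periodic pattern of a gap-$5$ colour, I must verify separately that the single vertex of colour $5$, together with its four incident edges, can be consistently coloured given the forced chord pattern of the other four classes. I expect the main difficulty to be precisely this bookkeeping of chord colours: showing that the rigid, forced chord patterns dictated by Lemma \ref{Lem773} for the three cheap colour classes necessarily collide on some edge, or else force two of the phases $i_{j_1},i_{j_2}$ into a forbidden difference, thereby contradicting Lemma \ref{Lem73}. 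Once a contradiction is reached in every configuration, no total $5$-colouring exists, so $\chi''(C_{13}(1,3))\ge 6$; combined with $\chi''(C_{13}(1,3))\le 6$ from Lemma \ref{Lem1T}, this yields $\chi''(C_{13}(1,3))=6$.
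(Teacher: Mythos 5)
Your overall strategy is the same as the paper's: assume a total $5$-colouring, invoke Lemmas \ref{Lem71}--\ref{Lem73} to pin down the class sizes, the gap profiles, the forced chord patterns, and the restriction $d^{2}_{j_1,j_2}\in\{1,3,6,7,10,12\}$, and then run a finite case analysis to a contradiction. Your pigeonhole observation is also correct as stated: one can check that the Cayley graph $\mathrm{Cay}(\mathbb{Z}_{13},\{\pm1,\pm3,\pm6\})$ contains no $4$-clique, so four phases $i_1,i_2,i_3,i_4$ with all pairwise differences in $\{1,3,6,7,10,12\}$ indeed cannot coexist, and this cleanly kills the sub-case in which all four large colour classes have a gap of $5$.

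The gap is that you have located the ``heart of the argument'' in the wrong place. A colour class with profile $(\{2,4,7\},4)$ has gaps $2,4,7$ and hence contains \emph{no} pair of vertices at distance $5$; for such a class the phase $i_j$ and the quantities $d^{2}_{j_1,j_2}$ are simply undefined, so your no-$4$-clique argument says nothing about it. Since Lemma \ref{Lem72} only guarantees that at least \emph{three} of the four classes have a $5$-gap profile, the unavoidable residual case is: three classes with phases forming a triangle in the Cayley graph (and triangles do exist, e.g.\ $\{0,1,7\}$ and $\{0,3,6\}$), plus a fourth class of type $(\{2,4,7\},4)$. This is precisely where the paper spends essentially all of its effort (Cases 1, 2.1, 2.2, 3.2, 3.3 with the accompanying figures), deriving an edge-colour collision between the four chords forced by the $7$-gap of the fourth class and the chords already forced by the other three. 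In your write-up this step is relegated to a vague ``fallback,'' and you instead flag $V_5$ as the remaining obstacle; in fact the singleton class $V_5$ plays no role at all in the contradiction. So the plan is workable and structurally matches the paper, but as written it does not close the decisive case, and the work needed there is a nontrivial explicit chord-tracking analysis rather than bookkeeping.
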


\begin{proof}	
Suppose  to the contrary that  $C_{13}(1,3)$ has a total $5$-colouring.  By Lemmas \ref{Lem71}--\ref{Lem72}, 
  $|V_j|= 3$, $(\{d_{j1},d_{j2},d_{j3}\}$, $c_{j})$ $\in\{(\{2,4,7\},4)$, $(\{2,5,6\},3)$, $(\{2,5,6\},4),(\{4,4,5\},3)\}$,
   and there are  at least three colours $j$ such that    $(\{d_{j1},d_{j2},d_{j3}\},c_{j})\in\{(\{2,5,6\},3),(\{4,4,5\},3)\}$ for $1\leq j\leq 4$.
   Without loss of generality, let  $(\{d_{j1},d_{j2},d_{j3}\},c_{j})\in\{(\{2,5,6\},3),(\{4,4,5\},3)\}$
   for $j=1,2,3$,
    and let $\sigma(v_{0}) = \sigma(v_5) =1$. Then   $\sigma(v_{12}v_{2}) = \sigma(v_{1}v_{4}) = \sigma(v_{3}v_{6}) = 1$. By symmetry, we need only consider $d^{2}_{1,2}\in\{1,3,6\}$.

Case 1. $d^{2}_{1,2}=1$. Then $\sigma(v_{0}v_{3}) = \sigma(v_{2}v_{5}) = \sigma(v_{4}v_{7}) = 2$ and $d^{2}_{1,3}\in\{3,6,7,10,12\}$. If $d^{2}_{1,3}\in\{3,6,10,12\}$, then  $d^{2}_{2,3}\notin\{1,3,6,7,10,12\}$. So,  $d^{2}_{1,3}=7$. It follows  $\sigma(v_{6}v_{9}) = \sigma(v_{8}v_{11}) = \sigma(v_{10}v_{0}) = 3$ and $d^{2}_{1,4}\in\{3,6,10,12\}$. If $d^{2}_{1,4}\in\{3,6,10,12\}$, then $d^{2}_{2,4}\notin\{1,3,6,7,10,12\}$. Hence,  $(\{d_{41},d_{42},d_{43}\},c_{4})$ =$(\{2,4,7\},4)$ and
 $V_{4}\in\{\{v_{2},v_{9},v_{11}\},\{v_{4},v_{11},$ $v_{2}\},\{v_{8},v_{2},v_{4}\},\{v_{10},v_{4},v_{8}\}\}$.

If $V_{4}\in \{\{v_{2},v_{9},v_{11}\},\{v_{4},v_{11},v_{2}\}\}$, then  $\sigma(v_{3}v_{6}) =4$ (see Figure \ref{Fign21} (1)--(2)), a contradiction.

  If $V_{4}\in\{\{v_{8},v_{2},v_{4}\},\{v_{10},v_{4},v_{8}\}\}$, then  $\sigma(v_{0}v_{3}) = 4$ (see Figure \ref{Fign21} (3)--(4)), a contradiction.
	\begin{figure}[h]
	\centering
	 \includegraphics[scale=0.69]{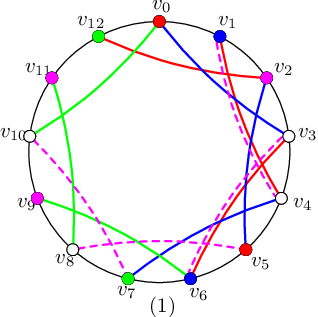}~
     \includegraphics[scale=0.69]{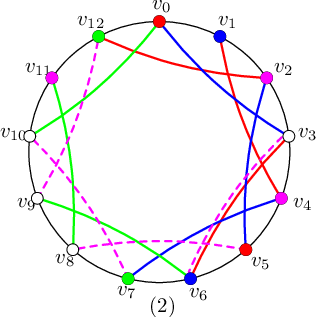}~
     \includegraphics[scale=0.69]{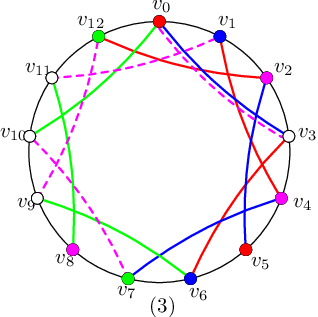}~
     \includegraphics[scale=0.69]{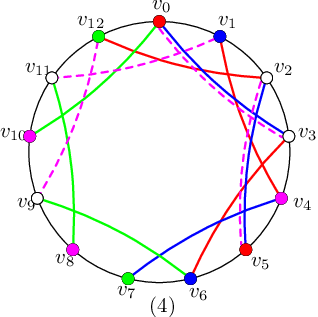}
    \caption{ $\sigma(C_{13}(1, 3))$ for  $d^{2}_{1,2} =1$.}
	\label{Fign21}
\end{figure}

Case 2. $d^{2}_{1,2}=3$. Then $\sigma(v_{2}v_{5}) = \sigma(v_{4}v_{7}) = \sigma(v_{6}v_{9}) = 2$ and $d^{2}_{1,3}\in\{1,6,7,10,12\}$. If $d^{2}_{1,3}\in\{1,7,12\}$, then $d^{2}_{2,3}\notin\{1,3,6,7,10,12\}$. Hence,  $d^{2}_{1,3}\in\{6,10\}$.

\indent Case 2.1. $d^{2}_{1,3}=6$. Then $\sigma(v_{5}v_{8}) = \sigma(v_{7}v_{10}) = \sigma(v_{9}v_{12}) = 3$ and  $d^{2}_{1,4}\in\{1,7,10,12\}$.	
If $d^{2}_{1,4}\in\{1,7,12\}$, then $d^{2}_{2,4}\notin\{1,3,6,7,10,12\}$. If  $d^{2}_{1,4}=10$, then $d^{2}_{3,4}\notin\{1,3,6,7,10,12\}$. So,   $(\{d_{41},d_{42},d_{43}\},c_{4})=(\{2,4,7\},4)$. It forces  $\sigma(v_{8}v_{11})$ =$\sigma(v_{11}v_{1})=\sigma(v_{10}v_{0})=\sigma(v_{0}v_{3})$ $=4$  (see Figure \ref{Fign22} (1)), a contradiction to the requirements of total colouring.

Case 2.2.  $d^{2}_{1,3}=10$. Then $\sigma(v_{9}v_{12}) = \sigma(v_{11}v_{1}) = \sigma(v_{0}v_{3}) = 3$ and $d^{2}_{1,4}\in\{1,6,7,12\}$.	
If $d^{2}_{1,4}\in\{1,7,12\}$, then $d^{2}_{2,4}\notin\{1,3,6,7,10,12\}$. If  $d^{2}_{1,4}=6$, then $d^{2}_{3,4}\notin\{1,3,6,7,10,12\}$. So   $(\{d_{41},d_{42},d_{43}\},c_{4})=(\{2,4,7\},4)$. It forces  $\sigma(v_{5}v_{8})=\sigma(v_{8}v_{11})=\sigma(v_{7}v_{10})=\sigma(v_{10}v_{0})=4$  (see Figure \ref{Fign22} (2)), a contradiction to the requirements of total colouring.
\begin{figure}[h]
	\centering
	 \includegraphics[scale=0.69]{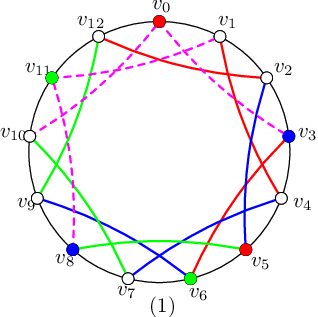}~~~~~~~~~~~~~
     \includegraphics[scale=0.69]{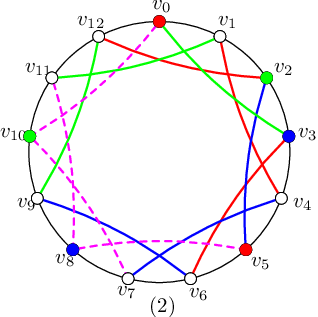}
     \caption{ $\sigma(C_{13}(1, 3))$ for  $d^{2}_{1,2}=3$.}
	\label{Fign22}
\end{figure}

Case 3. $d^{2}_{1,2}=6$. Then  $\sigma(v_{5}v_{8}) = \sigma(v_{7}v_{10}) = \sigma(v_{9}v_{12}) = 2$ and $d^{2}_{1,3}\in\{1,3,7,10,12\}$. If $d^{2}_{1,3}\in\{1,10\}$, then $d^{2}_{2,3}\notin\{1,3,6,7,10,12\}$. Hence,  $d^{2}_{1,3}\in\{3,7,12\}$. 

Case 3.1.  $d^{2}_{1,3}=3$.  This case is  analogous to  Case 2.1.

Case 3.2.  $d^{2}_{1,3}=7$. Then  $\sigma(v_{6}v_{9}) = \sigma(v_{8}v_{11}) = \sigma(v_{10}v_{0}) = 3$ and $d^{2}_{1,4}\in\{1,3,10,12\}$.	
If $d^{2}_{1,4}\in\{1,10\}$, then $d^{2}_{2,4}\notin\{1,3,6,7,10,12\}$. If  $d^{2}_{1,4}\in\{3,12\}$, then $d^{2}_{3,4}\notin\{1,3,6,7,10,12\}$. So,   $(\{d_{41},d_{42},d_{43}\},c_{4})=(\{2,4,7\},4)$ and
$V_{4}\in\{\{v_{1},v_{8},v_{10}\},\{v_{3},v_{10},v_{1}\},\{v_{8},v_{2},v_{4}\},\{v_{10},$ $v_{4},v_{8}\}\}$.

If $V_{4}\in \{\{v_{1},v_{8},v_{10}\},\{v_{3},v_{10},v_{1}\}\}$, then $\sigma(v_{6}v_{9}) = 4$ (see Figure \ref{Fign24}(1)--(2)), a contradiction.

If  $V_{4}\in\{\{v_{8},v_{2},v_{4}\},\{v_{10},v_{4},v_{8}\}\}$, then   $\sigma(v_{9}v_{12}) = 4$   (see Figure \ref{Fign24}(3)--(4)),  a contradiction.

Case 3.3.  $d^{2}_{1,3}=12$.  Then $\sigma(v_{11}v_{1}) = \sigma(v_{0}v_{3}) = \sigma(v_{2}v_{5}) = 3$ and $d^{2}_{1,4}\in\{1,3,7,10\}$.	
If $d^{2}_{1,4}\in\{1,10\}$, then $d^{2}_{2,4}\notin\{1,3,6,7,10,12\}$. If  $d^{2}_{1,4}\in\{3,7\}$, then $d^{2}_{3,4}\notin\{1,3,6,7,10,12\}$. So   $(\{d_{41},d_{42},d_{43}\},c_{4})=(\{2,4,7\},4)$ and
$V_{4}\in\{\{v_{1},v_{8},v_{10}\}$, $\{v_{3},v_{10},v_{1}\},\{v_{7},v_{1},v_{3}\},\{v_{9},$ $v_{3},v_{7}\}\}$.

If $V_{4}\in \{\{v_{1},v_{8},v_{10}\},\{v_{3},v_{10},v_{1}\}\}$, then  $\sigma(v_{2}v_{5}) = 4$ (see Figure \ref{Fign24}(5)--(6)), a contradiction.

If $V_{4}\in\{\{v_{7},v_{1},v_{3}\},\{v_{9},v_{3},v_{7}\}\}$, then  $\sigma(v_{12}v_{2}) = 4$ (see Figure \ref{Fign24}(7)--(8)), a contradiction.
\begin{figure}[h]
	\centering
	\includegraphics[scale=0.69]{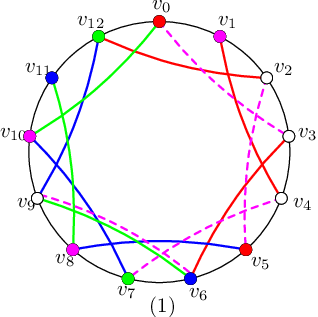}~
    \includegraphics[scale=0.69]{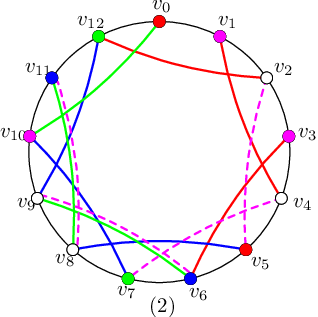}~      
    \includegraphics[scale=0.69]{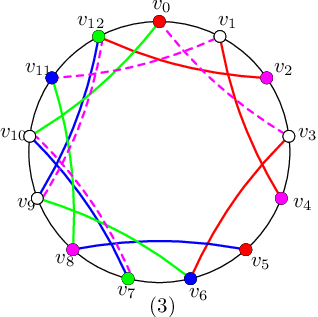}~
	\includegraphics[scale=0.69]{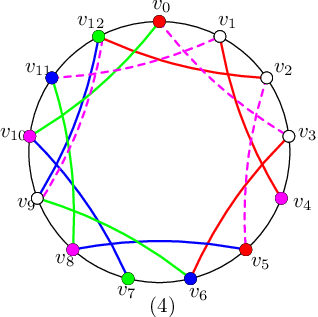}
 
     \vspace{7pt} 
    \includegraphics[scale=0.69]{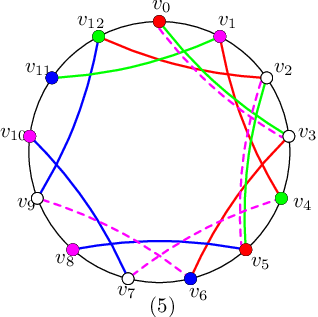}~
    \includegraphics[scale=0.69]{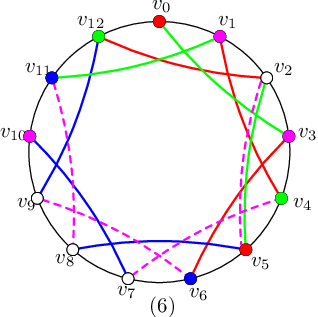}~
     \includegraphics[scale=0.69]{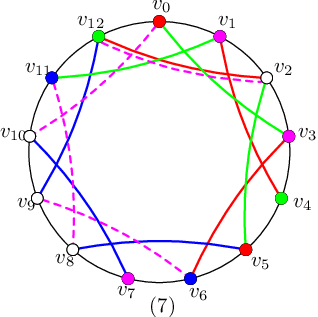}~
	\includegraphics[scale=0.69]{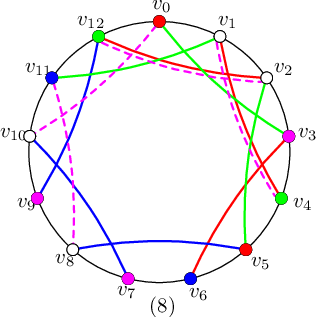}
		\caption{ $\sigma(C_{13}(1, 3))$ for  $d^{2}_{1,2} =6$.}
	\label{Fign24}
\end{figure}

From Cases 1--3, the assumption that  $C_{13}(1,3)$ has a total $5$-colouring does not hold.  Then there is   $\chi''(C_{13}(1,3)) \geq 6$.
  However, according to Lemma \ref{Lem1T}, $\chi''(C_{13}(1,3)) \leq 6$. So,
 $\chi''(C_{13}(1,3)) = 6$.
\end{proof}

\section{Conclusion }\label{Sec4}

In conclusion, we have completely determined the  total  chromatic numbers of $C_{n}(1, 3)$ for all $n\geq7$.
By combining Lemmas \ref{Lempq}, \ref{Lem51}, \ref{Lem7}--\ref{Lem22}, \ref{Lem8} and \ref{Lem13},   we  obtain the following theorem.

\begin{theorem}
$\chi''(C_{n}(1,3)) = 6$ for $n=7,8,12,13,17$,  and 5 for all others.
\label{The11}
\end{theorem}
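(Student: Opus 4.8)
The plan is to treat Theorem \ref{The11} as the synthesis of the lemmas already established; the only genuinely new work is to confirm that their hypotheses jointly cover every $n\ge 7$, after which the value of $\chi''$ on each class can simply be read off. The organising device is the residue decomposition recorded in Section \ref{Secpre}: writing $n\equiv r\pmod 5$ with $r\in\{0,1,2,3,4\}$, I would for each $r$ subtract the least number of $9$'s that leaves a nonnegative multiple of $5$, namely
\[
n=5k+9\cdot 0,\quad 5(k-1)+9\cdot 1,\quad 5(k-3)+9\cdot 2,\quad 5(k-5)+9\cdot 3,\quad 5(k-7)+9\cdot 4
\]
for $r=0,4,3,2,1$ respectively. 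Each expression is a valid nonnegative combination $5p+9q$ precisely once $k$ is large enough, so in every residue class all sufficiently large $n$ lie in the scope of Lemma \ref{Lempq} and hence satisfy $\chi''=5$.

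Next I would extract, class by class, the finitely many small $n\ge 7$ that fall below these thresholds. The constraint $p\ge 0$ makes the representations valid for $n\ge 0,\,9,\,18,\,27,\,36$ in classes $r=0,4,3,2,1$, and checking the finitely many $n\ge 7$ beneath each bound shows the non-representable values are exactly
\[
\{7,\,8,\,11,\,12,\,13,\,16,\,17,\,21,\,22,\,26,\,31\}.
\]
(That only finitely many exceptions can occur is guaranteed \emph{a priori} by $\gcd(5,9)=1$, whose Frobenius number $5\cdot 9-5-9=31$ already forces representability for every $n\ge 32$.) These eleven values are exactly the ones addressed outside Section \ref{Secpq}: $\{11,16,21,26,31\}$ by Lemma \ref{Lem51}, $\{7,12,17,22\}$ by Lemmas \ref{Lem7}, \ref{Lem12}, \ref{Lem17} and \ref{Lem22}, and $\{8,13\}$ by Lemmas \ref{Lem8} and \ref{Lem13}. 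Hence every $n\ge 7$ is covered and $\chi''(C_n(1,3))$ is determined in all cases.

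Finally I would collect the values. Lemma \ref{Lempq} gives $\chi''=5$ on all representable $n$, while Lemma \ref{Lem51} and Lemma \ref{Lem22} give $\chi''=5$ on $\{11,16,21,26,31\}$ and on $22$; conversely Lemmas \ref{Lem7}, \ref{Lem8}, \ref{Lem12}, \ref{Lem13} and \ref{Lem17} give $\chi''=6$ precisely on $\{7,8,12,13,17\}$. Since these Type~I and Type~II families are disjoint and their union is all of $\{n\ge 7\}$, the dichotomy stated in the theorem follows at once.

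The main obstacle is not any single colouring but the completeness of the case split: one must be certain that no $n\ge 7$ escapes both the $5p+9q$ form and the eleven-element exceptional list, and that the list contains no spurious entry, i.e.\ that each listed $n$ really is non-representable and so genuinely requires its own lemma. I would neutralise this risk by performing the representability test within each fixed residue class, where it reduces to a single threshold comparison together with a short finite check below the threshold, rather than scanning $n$ directly; this keeps the bookkeeping transparent and makes the exhaustiveness of the partition immediate.
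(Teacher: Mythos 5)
Your proposal is correct and follows essentially the same route as the paper: the theorem is obtained by combining Lemmas \ref{Lempq}, \ref{Lem51}, \ref{Lem7}--\ref{Lem22}, \ref{Lem8} and \ref{Lem13}, using exactly the residue-class decomposition the paper sets up in Section \ref{Secpre}. Your only addition is to make the completeness of the case split explicit (via the per-residue thresholds and the Frobenius number $5\cdot 9-5-9=31$), which the paper asserts more informally; the exceptional set $\{7,8,11,12,13,16,17,21,22,26,31\}$ you compute matches the paper's partition exactly.
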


 In other words, 4-regular circulant graphs $C_{n}(1, 3)$ are Type II  for n=$7,8,12,13,17$, and  Type I for all others. 

\Acknowledgements{}{We would like to thank  Bojan Mohar and Levit Maxwell for their helpful discussions.}
%
%
%
%


\begin{thebibliography}{99}


\bibitem{Be65}
M. Behzad, {\em Graphs and their chromatic numbers}, PhD thesis, Michigan State University, East Lansing, USA, 1965. 

\bibitem{Beh67}
M. Behzad, G. Chartrand and J.K. Cooper, {\it The colour numbers of complete graphs}, J. London Math. Soc. \textbf{42} (1), 226--228, 1967.

\bibitem{Cam03}
 C.N. Campos and C.P. de Mello, {\it Total colouring of $C^2_n$}, Trends Appl. Comput. Math. \textbf{4} (2), 177--186, 2003. 

\bibitem{Che88}
A.G. Chetwynd and A.J.W. Hilton, {\it Some refinements of the total chromatic number conjecture}, Congr. Numer. \textbf{66}, 195--216, 1988.

\bibitem{Far23}
 L. Faria, M. Nigro, M. Preissmann and D. Sasaki, {\it Results about the total chromatic number and the conformability of some families of circulant graphs}, Discrete Appl. Math. \textbf{340}, 123--133, 2023. 

\bibitem{Gee21}
J. Geetha, K. Somasundaram and H.L. Fu, {\it Total coloring of circulant graphs}, Discrete Math. Algorithms Appl. \textbf{13} (5), 2150050, 2021.

\bibitem{Gee23}
J. Geetha, N. Narayanan and K. Somasundaram, {\it Total colorings---a survey}, AKCE Int. J. Graphs Combin. \textbf{20} (3), 339--351, 2023.

\bibitem{Khe08}
 R. Khennoufa and O. Togni, {\it Total and fractional total colourings of circulant graphs}, Discrete Math. \textbf{308} (24), 6316--6329, 2008.

\bibitem{Kos77}
A.V. Kostochka, {\it The total chromatic number of a multigraph with maximal degree 4}, Discrete Math. \textbf{17}, 161--163, 1977. 

\bibitem{Mc94}
C.J.H. McDiarmid and A. Sánchez-Arroyo, {\it Total coloring regular bipartite graphs is NP-hard}, Discrete Math. \textbf{124}, 155--162, 1994. 

\bibitem{Nav22}
R. Navaneeth, J. Geetha, K. Somasundaram and H.L. Fu, {\it Total colorings of some classes of four regular circulant graphs}, AKCE Int. J. Graphs Combin. \textbf{21} (1), 1--3, 2024. 

\bibitem{Nig21}
M. Nigro, M.N. Adauto and D. Sasaki, {\it On total coloring of 4-regular circulant graphs}, Procedia Comput. Sci. \textbf{195}, 315--324, 2021. 

\bibitem{Pal23}
M.A.D.R. Palma, I.F.A. Gonçalves, D. Sasaki and S. Dantas, {\it On total coloring and equitable total coloring of infinite snark families}, RAIRO - Oper. Res. \textbf{57}, 2619--2637, 2023. 

\bibitem{Pra22}
S. Prajnanaswaroopa, J. Jayabalan, K. Somasundaram, H.L. Fu and N. Narayanan, {\it On total coloring of some classes of regular graphs}, Taiwanese J. Math. \textbf{26} (4), 667--683, 2022.

\bibitem{SArr89}
A. Sánchez-Arroyo, {\it Determining the total coloring number is NP-hard}, Discrete Math. \textbf{78}, 315--319, 1989. 

\bibitem{Tong19}
C.L. Tong, X.H. Lin and Y.S. Yang, {\it Equitable total coloring of generalized Petersen graphs $P(n,k)$}, Ars Combin. \textbf{143}, 321--336, 2019.

\bibitem{Tong09}
C.L. Tong, X.H. Lin, Y.S. Yang and Z.H. Li, {\it Equitable total coloring of $C_m \Box C_n$}, Discrete Appl. Math. \textbf{157}, 596--601, 2009. 

\bibitem{Yap96}
H.P. Yap, {\em Total colourings of graphs}, Lecture Notes in Mathematics \textbf{1623}, Springer-Verlag, Berlin, 1996. 

\bibitem{Viz68}
V.G. Vizing, {\it Some unsolved problems in graph theory}, Russian Math. Surveys \textbf{23} (6), 125--141, 1968.
\end{thebibliography}
\end{document}